\numberwithin{equation}{section}
\numberwithin{figure}{section}
\chardef\@x10\chardef\@xv60
\def\tcitime{
\def\@time{%
  \@minute\time\@hour\@minute\divide\@hour\@xv
  \ifnum\@hour<\@x 0\fi\the\@hour:%
  \multiply\@hour\@xv\advance\@minute-\@hour
  \ifnum\@minute<\@x 0\fi\the\@minute
  }}%
\def\QCTOpt[#1]#2{%
  \def\QCTOptB{#1}
  \def\QCTOptA{#2}
}
\def\QCTNOpt#1{%
  \def\QCTOptA{#1}
  \let\QCTOptB\empty
}
\def\Qct{%
  \@ifnextchar[{%
    \QCTOpt}{\QCTNOpt}
}
\def\QCBOpt[#1]#2{%
  \def\QCBOptB{#1}
  \def\QCBOptA{#2}
}
\def\QCBNOpt#1{%
  \def\QCBOptA{#1}
  \let\QCBOptB\empty
}
\def\Qcb{%
  \@ifnextchar[{%
    \QCBOpt}{\QCBNOpt}
}
\def\PrepCapArgs{%
  \ifx\QCBOptA\empty
    \ifx\QCTOptA\empty
      {}%
    \else
      \ifx\QCTOptB\empty
        {\QCTOptA}%
      \else
        [\QCTOptB]{\QCTOptA}%
      \fi
    \fi
  \else
    \ifx\QCBOptA\empty
      {}%
    \else
      \ifx\QCBOptB\empty
        {\QCBOptA}%
      \else
        [\QCBOptB]{\QCBOptA}%
      \fi
    \fi
  \fi
}
\def\GRAPHICSPS#1{%
 \ifcase\GRAPHICSTYPE
   \special{ps: #1}%
 \or
   \special{language "PS", include "#1"}%
 \fi
}%
\def\graffile#1#2#3#4{%
    \leavevmode
    \raise -#4 \BOXTHEFRAME{%
        \hbox to #2{\raise #3\hbox to #2{\null #1\hfil}}}%
}%
\def\draftbox#1#2#3#4{%
 \leavevmode\raise -#4 \hbox{%
  \frame{\rlap{\protect\tiny #1}\hbox to #2%
   {\vrule height#3 width\z@ depth\z@\hfil}%
  }%
 }%
}%
\newif\ifwasdraft
\def\GRAPHIC#1#2#3#4#5{%
 \ifnum\draft=\@ne\draftbox{#2}{#3}{#4}{#5}%
  \else\graffile{#1}{#3}{#4}{#5}%
  \fi
 }%
\def\addtoLaTeXparams#1{%
    \edef\LaTeXparams{\LaTeXparams #1}}%
\newif\ifBoxFrame \BoxFramefalse
\newif\ifOverFrame \OverFramefalse
\newif\ifUnderFrame \UnderFramefalse
\def\BOXTHEFRAME#1{%
   \hbox{%
      \ifBoxFrame
         \frame{#1}%
      \else
         {#1}%
      \fi
   }%
}
\def\doFRAMEparams#1{\BoxFramefalse\OverFramefalse\UnderFramefalse\readFRAMEparams#1\end}%
\def\readFRAMEparams#1{%
 \ifx#1\end%
  \let\next=\relax
  \else
  \ifx#1i\dispkind=\z@\fi
  \ifx#1d\dispkind=\@ne\fi
  \ifx#1f\dispkind=\tw@\fi
  \ifx#1t\addtoLaTeXparams{t}\fi
  \ifx#1b\addtoLaTeXparams{b}\fi
  \ifx#1p\addtoLaTeXparams{p}\fi
  \ifx#1h\addtoLaTeXparams{h}\fi
  \ifx#1X\BoxFrametrue\fi
  \ifx#1O\OverFrametrue\fi
  \ifx#1U\UnderFrametrue\fi
  \ifx#1w
    \ifnum\draft=1\wasdrafttrue\else\wasdraftfalse\fi
    \draft=\@ne
  \fi
  \let\next=\readFRAMEparams
  \fi
 \next
 }%
\def\IFRAME#1#2#3#4#5#6{%
      \bgroup
      \let\QCTOptA\empty
      \let\QCTOptB\empty
      \let\QCBOptA\empty
      \let\QCBOptB\empty
      #6%
      \parindent=0pt%
      \leftskip=0pt
      \rightskip=0pt
      \setbox0 = \hbox{\QCBOptA}%
      \@tempdima = #1\relax
      \ifOverFrame
          \typeout{This is not implemented yet}%
          \show\HELP
      \else
         \ifdim\wd0>\@tempdima
            \advance\@tempdima by \@tempdima
            \ifdim\wd0 >\@tempdima
               \textwidth=\@tempdima
               \setbox1 =\vbox{%
                  \noindent\hbox to \@tempdima{\hfill\GRAPHIC{#5}{#4}{#1}{#2}{#3}\hfill}\\%
                  \noindent\hbox to \@tempdima{\parbox[b]{\@tempdima}{\QCBOptA}}%
               }%
               \wd1=\@tempdima
            \else
               \textwidth=\wd0
               \setbox1 =\vbox{%
                 \noindent\hbox to \wd0{\hfill\GRAPHIC{#5}{#4}{#1}{#2}{#3}\hfill}\\%
                 \noindent\hbox{\QCBOptA}%
               }%
               \wd1=\wd0
            \fi
         \else
            \ifdim\wd0>0pt
              \hsize=\@tempdima
              \setbox1 =\vbox{%
                \unskip\GRAPHIC{#5}{#4}{#1}{#2}{0pt}%
                \break
                \unskip\hbox to \@tempdima{\hfill \QCBOptA\hfill}%
              }%
              \wd1=\@tempdima
           \else
              \hsize=\@tempdima
              \setbox1 =\vbox{%
                \unskip\GRAPHIC{#5}{#4}{#1}{#2}{0pt}%
              }%
              \wd1=\@tempdima
           \fi
         \fi
         \@tempdimb=\ht1
         \advance\@tempdimb by \dp1
         \advance\@tempdimb by -#2%
         \advance\@tempdimb by #3%
         \leavevmode
         \raise -\@tempdimb \hbox{\box1}%
      \fi
      \egroup%
}%
\def\DFRAME#1#2#3#4#5{%
 \begin{center}
     \let\QCTOptA\empty
     \let\QCTOptB\empty
     \let\QCBOptA\empty
     \let\QCBOptB\empty
     \ifOverFrame 
        #5\QCTOptA\par
     \fi
     \GRAPHIC{#4}{#3}{#1}{#2}{\z@}
     \ifUnderFrame 
        \nobreak\par #5\QCBOptA
     \fi
 \end{center}%
 }%
\def\FFRAME#1#2#3#4#5#6#7{%
 \begin{figure}[#1]%
  \let\QCTOptA\empty
  \let\QCTOptB\empty
  \let\QCBOptA\empty
  \let\QCBOptB\empty
  \ifOverFrame
    #4
    \ifx\QCTOptA\empty
    \else
      \ifx\QCTOptB\empty
        \caption{\QCTOptA}%
      \else
        \caption[\QCTOptB]{\QCTOptA}%
      \fi
    \fi
    \ifUnderFrame\else
      \label{#5}%
    \fi
  \else
    \UnderFrametrue%
  \fi
  \begin{center}\GRAPHIC{#7}{#6}{#2}{#3}{\z@}\end{center}%
  \ifUnderFrame
    #4
    \ifx\QCBOptA\empty
      \caption{}%
    \else
      \ifx\QCBOptB\empty
        \caption{\QCBOptA}%
      \else
        \caption[\QCBOptB]{\QCBOptA}%
      \fi
    \fi
    \label{#5}%
  \fi
  \end{figure}%
 }%
\def\makeactives{
  \catcode`\"=\active
  \catcode`\;=\active
  \catcode`\:=\active
  \catcode`\'=\active
  \catcode`\~=\active
}
   \gdef\activesoff{%
      \def"{\string"}
      \def;{\string;}
      \def:{\string:}
      \def'{\string'}
      \def~{\string~}
    }
\def\FRAME#1#2#3#4#5#6#7#8{%
 \bgroup
 \@ifundefined{bbl@deactivate}{}{\activesoff}
 \ifnum\draft=\@ne
   \wasdrafttrue
 \else
   \wasdraftfalse%
 \fi
 \def\LaTeXparams{}%
 \dispkind=\z@
 \def\LaTeXparams{}%
 \doFRAMEparams{#1}%
 \ifnum\dispkind=\z@\IFRAME{#2}{#3}{#4}{#7}{#8}{#5}\else
  \ifnum\dispkind=\@ne\DFRAME{#2}{#3}{#7}{#8}{#5}\else
   \ifnum\dispkind=\tw@
    \edef\@tempa{\noexpand\FFRAME{\LaTeXparams}}%
    \@tempa{#2}{#3}{#5}{#6}{#7}{#8}%
    \fi
   \fi
  \fi
  \ifwasdraft\draft=1\else\draft=0\fi{}%
  \egroup
 }%
\def\TEXUX#1{"texux"}
\def\func#1{\mathop{\rm #1}}%
\long\def\QQQ#1#2{%
     \long\expandafter\def\csname#1\endcsname{#2}}%
\long\def\QQA#1#2{}%
\def\QTR#1#2{{\csname#1\endcsname #2}}
\def\EXPAND#1[#2]#3{}%
\def\NOEXPAND#1[#2]#3{}%
\def\LaTeXparent#1{}%
\def\ChildStyles#1{}%
\def\ChildDefaults#1{}%
\def\QTagDef#1#2#3{}%
\def\QQfnmark#1{\footnotemark}
\def\makeatletter\input gnuindex.sty\makeatother\makeindex{\makeatletter\input gnuindex.sty\makeatother\makeindex}%
\def\initial#1{\bigbreak{\raggedright\large\bf #1}\kern 2\p@\penalty3000}}%
 \def\abstract{%
  \if@twocolumn
   \section*{Abstract (Not appropriate in this style!)}%
   \else \small 
   \begin{center}{\bf Abstract\vspace{-.5em}\vspace{\z@}}\end{center}%
   \quotation 
   \fi
  }%
   \def\registered{\relax\ifmmode{}\r@gistered
                    \else$\m@th\r@gistered$\fi}%
 \def\r@gistered{^{\ooalign
  {\hfil\raise.07ex\hbox{$\scriptstyle\rm\text{R}$}\hfil\crcr
  \mathhexbox20D}}}}{}%
\newdimen\theight
\def\Column{%
 \vadjust{\setbox\z@=\hbox{\scriptsize\quad\quad tcol}%
  \theight=\ht\z@\advance\theight by \dp\z@\advance\theight by \lineskip
  \kern -\theight \vbox to \theight{%
   \rightline{\rlap{\box\z@}}%
   \vss
   }%
  }%
 }%
\def\qed{%
 \ifhmode\unskip\nobreak\fi\ifmmode\ifinner\else\hskip5\p@\fi\fi
 \hbox{\hskip5\p@\vrule width4\p@ height6\p@ depth1.5\p@\hskip\p@}%
 }%
\def\miss{\hbox{\vrule height2\p@ width 2\p@ depth\z@}}%
\def\tcol#1{{\baselineskip=6\p@ \vcenter{#1}} \Column}  %
\def\newfmtname{LaTeX2e}
\def\chkcompat{%
   \if@compatibility
   \else
     \usepackage{latexsym}
   \fi
}
  \DeclareOldFontCommand{\rm}{\normalfont\rmfamily}{\mathrm}
  \DeclareOldFontCommand{\sf}{\normalfont\sffamily}{\mathsf}
  \DeclareOldFontCommand{\tt}{\normalfont\ttfamily}{\mathtt}
  \DeclareOldFontCommand{\bf}{\normalfont\bfseries}{\mathbf}
  \DeclareOldFontCommand{\it}{\normalfont\itshape}{\mathit}
  \DeclareOldFontCommand{\sl}{\normalfont\slshape}{\@nomath\sl}
  \DeclareOldFontCommand{\sc}{\normalfont\scshape}{\@nomath\sc}
\def\alpha{{\Greekmath 010B}}%
\def\beta{{\Greekmath 010C}}%
\def\gamma{{\Greekmath 010D}}%
\def\delta{{\Greekmath 010E}}%
\def\epsilon{{\Greekmath 010F}}%
\def\zeta{{\Greekmath 0110}}%
\def\eta{{\Greekmath 0111}}%
\def\theta{{\Greekmath 0112}}%
\def\iota{{\Greekmath 0113}}%
\def\kappa{{\Greekmath 0114}}%
\def\lambda{{\Greekmath 0115}}%
\def\mu{{\Greekmath 0116}}%
\def\nu{{\Greekmath 0117}}%
\def\xi{{\Greekmath 0118}}%
\def\pi{{\Greekmath 0119}}%
\def\rho{{\Greekmath 011A}}%
\def\sigma{{\Greekmath 011B}}%
\def\tau{{\Greekmath 011C}}%
\def\upsilon{{\Greekmath 011D}}%
\def\phi{{\Greekmath 011E}}%
\def\chi{{\Greekmath 011F}}%
\def\psi{{\Greekmath 0120}}%
\def\omega{{\Greekmath 0121}}%
\def\varepsilon{{\Greekmath 0122}}%
\def\vartheta{{\Greekmath 0123}}%
\def\varpi{{\Greekmath 0124}}%
\def\varrho{{\Greekmath 0125}}%
\def\varsigma{{\Greekmath 0126}}%
\def\varphi{{\Greekmath 0127}}%
\def\nabla{{\Greekmath 0272}}
\def\FindBoldGroup{%
   {\setbox0=\hbox{$\mathbf{x\global\edef\theboldgroup{\the\mathgroup}}$}}%
}
\def\Greekmath#1#2#3#4{%
    \if@compatibility
        \ifnum\mathgroup=\symbold
           \mathchoice{\mbox{\boldmath$\displaystyle\mathchar"#1#2#3#4$}}%
                      {\mbox{\boldmath$\textstyle\mathchar"#1#2#3#4$}}%
                      {\mbox{\boldmath$\scriptstyle\mathchar"#1#2#3#4$}}%
                      {\mbox{\boldmath$\scriptscriptstyle\mathchar"#1#2#3#4$}}%
        \else
           \mathchar"#1#2#3#4%
        \fi 
    \else 
        \FindBoldGroup
        \ifnum\mathgroup=\theboldgroup 
           \mathchoice{\mbox{\boldmath$\displaystyle\mathchar"#1#2#3#4$}}%
                      {\mbox{\boldmath$\textstyle\mathchar"#1#2#3#4$}}%
                      {\mbox{\boldmath$\scriptstyle\mathchar"#1#2#3#4$}}%
                      {\mbox{\boldmath$\scriptscriptstyle\mathchar"#1#2#3#4$}}%
        \else
           \mathchar"#1#2#3#4%
        \fi     	    
	  \fi}
\newif\ifGreekBold  \GreekBoldfalse
\let\SAVEPBF=\pbf
\def\pbf{\GreekBoldtrue\SAVEPBF}%
  \newcounter{equationnumber}  
  \def\mathletters{%
     \addtocounter{equation}{1}
     \edef\@currentlabel{\theequation}%
     \setcounter{equationnumber}{\c@equation}
     \setcounter{equation}{0}%
     \edef\theequation{\@currentlabel\noexpand\alph{equation}}%
  }
    \def\BibTeX{{\rm B\kern-.05em{\sc i\kern-.025em b}\kern-.08em
                 T\kern-.1667em\lower.7ex\hbox{E}\kern-.125emX}}}{}%
\def\AmS{{\protect\usefont{OMS}{cmsy}{m}{n}%
                A\kern-.1667em\lower.5ex\hbox{M}\kern-.125emS}}}{}%
\def\DN@{\def\next@}%
\def\eat@#1{}%
\let\DOTSI\relax
\def\RIfM@{\relax\ifmmode}%
\def\FN@{\futurelet\next}%
\def\iint{\DOTSI\intno@\tw@\FN@\ints@}%
\def\iiint{\DOTSI\intno@\thr@@\FN@\ints@}%
\def\iiiint{\DOTSI\intno@4 \FN@\ints@}%
\def\idotsint{\DOTSI\intno@\z@\FN@\ints@}%
\def\ints@{\findlimits@\ints@@}%
\newif\iflimtoken@
\newif\iflimits@
\def\findlimits@{\limtoken@true\ifx\next\limits\limits@true
 \else\ifx\next\nolimits\limits@false\else
 \limtoken@false\ifx\ilimits@\nolimits\limits@false\else
 \ifinner\limits@false\else\limits@true\fi\fi\fi\fi}%
\def\multint@{\int\ifnum\intno@=\z@\intdots@                          
 \else\intkern@\fi                                                    
 \ifnum\intno@>\tw@\int\intkern@\fi                                   
 \ifnum\intno@>\thr@@\int\intkern@\fi                                 
 \int}
\def\multintlimits@{\intop\ifnum\intno@=\z@\intdots@\else\intkern@\fi
 \ifnum\intno@>\tw@\intop\intkern@\fi
 \ifnum\intno@>\thr@@\intop\intkern@\fi\intop}%
\def\intic@{%
    \mathchoice{\hskip.5em}{\hskip.4em}{\hskip.4em}{\hskip.4em}}%
\def\negintic@{\mathchoice
 {\hskip-.5em}{\hskip-.4em}{\hskip-.4em}{\hskip-.4em}}%
\def\ints@@{\iflimtoken@                                              
 \def\ints@@@{\iflimits@\negintic@
   \mathop{\intic@\multintlimits@}\limits                             
  \else\multint@\nolimits\fi                                          
  \eat@}
 \else                                                                
 \def\ints@@@{\iflimits@\negintic@
  \mathop{\intic@\multintlimits@}\limits\else
  \multint@\nolimits\fi}\fi\ints@@@}%
\def\intkern@{\mathchoice{\!\!\!}{\!\!}{\!\!}{\!\!}}%
\def\plaincdots@{\mathinner{\cdotp\cdotp\cdotp}}%
\def\intdots@{\mathchoice{\plaincdots@}%
 {{\cdotp}\mkern1.5mu{\cdotp}\mkern1.5mu{\cdotp}}%
 {{\cdotp}\mkern1mu{\cdotp}\mkern1mu{\cdotp}}%
 {{\cdotp}\mkern1mu{\cdotp}\mkern1mu{\cdotp}}}%
\def\RIfM@{\relax\protect\ifmmode}
\def\text{\RIfM@\expandafter\text@\else\expandafter\mbox\fi}
\let\nfss@text\text
\def\text@#1{\mathchoice
   {\textdef@\displaystyle\f@size{#1}}%
   {\textdef@\textstyle\tf@size{\firstchoice@false #1}}%
   {\textdef@\textstyle\sf@size{\firstchoice@false #1}}%
   {\textdef@\textstyle \ssf@size{\firstchoice@false #1}}%
   \glb@settings}
\def\textdef@#1#2#3{\hbox{{%
                    \everymath{#1}%
                    \let\f@size#2\selectfont
                    #3}}}
\newif\iffirstchoice@
\def\Let@{\relax\iffalse{\fi\let\\=\cr\iffalse}\fi}%
\def\vspace@{\def\vspace##1{\crcr\noalign{\vskip##1\relax}}}%
\def\multilimits@{\bgroup\vspace@\Let@
 \baselineskip\fontdimen10 \scriptfont\tw@
 \advance\baselineskip\fontdimen12 \scriptfont\tw@
 \lineskip\thr@@\fontdimen8 \scriptfont\thr@@
 \lineskiplimit\lineskip
 \vbox\bgroup\ialign\bgroup\hfil$\m@th\scriptstyle{##}$\hfil\crcr}%
\def\Sb{_\multilimits@}%
\def\endSb{\crcr\egroup\egroup\egroup}%
\def\Sp{^\multilimits@}%
\newdimen\ex@
\def\rightarrowfill@#1{$#1\m@th\mathord-\mkern-6mu\cleaders
 \hbox{$#1\mkern-2mu\mathord-\mkern-2mu$}\hfill
 \mkern-6mu\mathord\rightarrow$}%
\def\leftarrowfill@#1{$#1\m@th\mathord\leftarrow\mkern-6mu\cleaders
 \hbox{$#1\mkern-2mu\mathord-\mkern-2mu$}\hfill\mkern-6mu\mathord-$}%
\def\leftrightarrowfill@#1{$#1\m@th\mathord\leftarrow
\mkern-6mu\cleaders
 \hbox{$#1\mkern-2mu\mathord-\mkern-2mu$}\hfill
 \mkern-6mu\mathord\rightarrow$}%
\def\overrightarrow{\mathpalette\overrightarrow@}%
\def\overrightarrow@#1#2{\vbox{\ialign{##\crcr\rightarrowfill@#1\crcr
 \noalign{\kern-\ex@\nointerlineskip}$\m@th\hfil#1#2\hfil$\crcr}}}%
\def\overleftarrow{\mathpalette\overleftarrow@}%
\def\overleftarrow@#1#2{\vbox{\ialign{##\crcr\leftarrowfill@#1\crcr
 \noalign{\kern-\ex@\nointerlineskip}$\m@th\hfil#1#2\hfil$\crcr}}}%
\def\overleftrightarrow{\mathpalette\overleftrightarrow@}%
\def\overleftrightarrow@#1#2{\vbox{\ialign{##\crcr
   \leftrightarrowfill@#1\crcr
 \noalign{\kern-\ex@\nointerlineskip}$\m@th\hfil#1#2\hfil$\crcr}}}%
\def\underrightarrow{\mathpalette\underrightarrow@}%
\def\underrightarrow@#1#2{\vtop{\ialign{##\crcr$\m@th\hfil#1#2\hfil
  $\crcr\noalign{\nointerlineskip}\rightarrowfill@#1\crcr}}}%
\def\underleftarrow{\mathpalette\underleftarrow@}%
\def\underleftarrow@#1#2{\vtop{\ialign{##\crcr$\m@th\hfil#1#2\hfil
  $\crcr\noalign{\nointerlineskip}\leftarrowfill@#1\crcr}}}%
\def\underleftrightarrow{\mathpalette\underleftrightarrow@}%
\def\underleftrightarrow@#1#2{\vtop{\ialign{##\crcr$\m@th
  \hfil#1#2\hfil$\crcr
 \noalign{\nointerlineskip}\leftrightarrowfill@#1\crcr}}}%
\def\qopnamewl@#1{\mathop{\operator@font#1}\nlimits@}
\let\nlimits@\displaylimits
\def\setboxz@h{\setbox\z@\hbox}
\def\varlim@#1#2{\mathop{\vtop{\ialign{##\crcr
 \hfil$#1\m@th\operator@font lim$\hfil\crcr
 \noalign{\nointerlineskip}#2#1\crcr
 \noalign{\nointerlineskip\kern-\ex@}\crcr}}}}
 \def\rightarrowfill@#1{\m@th\setboxz@h{$#1-$}\ht\z@\z@
  $#1\copy\z@\mkern-6mu\cleaders
  \hbox{$#1\mkern-2mu\box\z@\mkern-2mu$}\hfill
  \mkern-6mu\mathord\rightarrow$}
\def\leftarrowfill@#1{\m@th\setboxz@h{$#1-$}\ht\z@\z@
  $#1\mathord\leftarrow\mkern-6mu\cleaders
  \hbox{$#1\mkern-2mu\copy\z@\mkern-2mu$}\hfill
  \mkern-6mu\box\z@$}
\def\projlim{\qopnamewl@{proj\,lim}}
\def\injlim{\qopnamewl@{inj\,lim}}
\def\varinjlim{\mathpalette\varlim@\rightarrowfill@}
\def\varprojlim{\mathpalette\varlim@\leftarrowfill@}
\def\varliminf{\mathpalette\varliminf@{}}
\def\varliminf@#1{\mathop{\underline{\vrule\@depth.2\ex@\@width\z@
   \hbox{$#1\m@th\operator@font lim$}}}}
\def\varlimsup{\mathpalette\varlimsup@{}}
\def\varlimsup@#1{\mathop{\overline
  {\hbox{$#1\m@th\operator@font lim$}}}}
\def\align{\@verbatim \frenchspacing\@vobeyspaces \@alignverbatim
You are using the "align" environment in a style in which it is not defined.}
\let\csname endalign*\endcsname =\endtrivlist
\def\alignat{\@verbatim \frenchspacing\@vobeyspaces \@alignatverbatim
You are using the "alignat" environment in a style in which it is not defined.}
\let\csname endalignat*\endcsname =\endtrivlist
\def\xalignat{\@verbatim \frenchspacing\@vobeyspaces \@xalignatverbatim
You are using the "xalignat" environment in a style in which it is not defined.}
\let\csname endxalignat*\endcsname =\endtrivlist
\def\gather{\@verbatim \frenchspacing\@vobeyspaces \@gatherverbatim
You are using the "gather" environment in a style in which it is not defined.}
\let\csname endgather*\endcsname =\endtrivlist
\def\multiline{\@verbatim \frenchspacing\@vobeyspaces \@multilineverbatim
You are using the "multiline" environment in a style in which it is not defined.}
\let\csname endmultiline*\endcsname =\endtrivlist
\def\arrax{\@verbatim \frenchspacing\@vobeyspaces \@arraxverbatim
You are using a type of "array" construct that is only allowed in AmS-LaTeX.}
\def\tabulax{\@verbatim \frenchspacing\@vobeyspaces \@tabulaxverbatim
You are using a type of "tabular" construct that is only allowed in AmS-LaTeX.}
\let\csname endarrax*\endcsname =\endtrivlist
\let\csname endtabulax*\endcsname =\endtrivlist
\def\@@eqncr{\let\@tempa\relax
    \ifcase\@eqcnt \def\@tempa{& & &}\or \def\@tempa{& &}%
      \else \def\@tempa{&}\fi
     \@tempa
     \if@eqnsw
        \iftag@
           \@taggnum
        \else
           \@eqnnum\stepcounter{equation}%
        \fi
     \fi
     \global\tag@false
     \global\@eqnswtrue
     \global\@eqcnt\z@\cr}
 \def\endequation{%
     \ifmmode\ifinner 
      \iftag@
        \addtocounter{equation}{-1} 
        $\hfil
           \displaywidth\linewidth\@taggnum\egroup \endtrivlist
        \global\tag@false
        \global\@ignoretrue   
      \else
        $\hfil
           \displaywidth\linewidth\@eqnnum\egroup \endtrivlist
        \global\tag@false
        \global\@ignoretrue 
      \fi
     \else   
      \iftag@
        \addtocounter{equation}{-1} 
        \eqno \hbox{\@taggnum}
        \global\tag@false%
        $$\global\@ignoretrue
      \else
        \eqno \hbox{\@eqnnum}
        $$\global\@ignoretrue
      \fi
     \fi\fi
 } 
 \newif\iftag@ \tag@false
 \def\tag{\@ifnextchar*{\@tagstar}{\@tag}}
 \def\@tag#1{%
     \global\tag@true
     \global\def\@taggnum{(#1)}}
 \def\@tagstar*#1{%
     \global\tag@true
     \global\def\@taggnum{#1}%
}
\theoremstyle{definition}
\theoremstyle{remark}
\numberwithin{equation}{section}
\begin{document}

\title[Kirillov models and integral structures]{Kirillov models and the Breuil-Schneider conjecture for $GL_{2}(F)$}

\author{Eran Assaf}

\author{David Kazhdan}

\author{Ehud de Shalit}

\address{Hebrew University, Jerusalem, Israel}

\email{deshalit@math.huji.ac.il}
\begin{abstract}
Let $F$ be a local field of characteristic 0. The Breuil-Schneider
conjecture for $GL_{2}(F)$ predicts which locally algebraic representations
of this group admit an integral structure. We extend the methods of
{[}K-dS12{]}, which treated smooth representations only, to prove
the conjecture for some locally algebraic representations as well. 
\end{abstract}
\maketitle

\section{Introduction}

\subsection{Background}

Let $F$ be a local field of characteristic 0 and residue characteristic
$p$, $\pi$ a fixed uniformizer of $F,$ and $q$ the cardinality
of its residue field $\mathcal{O}_{F}/\pi\mathcal{O}_{F}.$ Let $E$
be an algebraic closure of $F$.

Let $\mathbf{G}$ be a reductive group over $F$ and $G=\mathbf{G}(F).$
A \emph{locally algebraic} representation $(\rho,V_{\rho})$ of $G$
over $E$ is a representation of the type 
\begin{equation}
\rho=\tau\otimes\sigma
\end{equation}
where $(\tau,V_{\tau})$ is (the $E$-points of) a finite dimensional
rational representation of $\mathbf{G}$, and $(\sigma,V_{\sigma})$
is a smooth representation of $G$ over $E.$ An \emph{integral structure}
$V_{\rho}^{0}$ in $V_{\rho}$ is an $\mathcal{O}_{E}[G]$-submodule
which spans $V_{\rho}$ over $E,$ but does not contain any $E$-line.

If $\tau$ and $\sigma$ are irreducible then $\rho$ is irreducible
as well ({[}P01{]}, Theorem 1). In such a case, a non-zero $\mathcal{O}_{E}[G]$-submodule
$V_{\rho}^{0}$ of $V_{\rho}$ is an integral structure if and only
if it is properly contained in $V_{\rho}.$ Indeed, the union of all
$E$-lines in $V_{\rho}^{0},$ as well as the subspace of $V_{\rho}$
spanned by $V_{\rho}^{0}$ over $E,$ are both $E[G]$-submodules
of $V_{\rho}.$ If $0\subset V_{\rho}^{0}\subset V_{\rho}$ (both
inclusions being proper), the irreducibility of $\rho$ implies that
the first is $0$, and the second is $V_{\rho}.$

Two integral structures in $V_{\rho}$ are \emph{commensurable }if
each of them is contained in a scalar multiple of the other. In general,
$V_{\rho}$ need not contain an integral structure. When such an integral
structure exists, it need not be unique, even up to commensurability.
However, if $\rho$ is irreducible, and an integral structure does
exist, there is a unique commensurability class of \emph{minimal integral
structures}, namely the class of any cyclic $\mathcal{O}_{E}[G]$-module.
Thus, when $\rho$ is irreducible, to test whether integral structures
exist at all, it is enough to check that for some $0\neq v\in V_{\rho},$
$\mathcal{O}_{E}[G]v$ is not the whole of $V_{\rho}.$

The existence (and classification) of integral structures in irreducible
locally algebraic representations is a natural and important question
for the $p$\emph{-adic local Langlands programme }(see {[}Br10{]})\emph{.}
When $\mathbf{G}=GL_{n},$ a precise conjecture for the conditions
on $\tau$ and $\sigma$ under which an integral structure should
exist in $\rho$ was proposed by Breuil and Schneider in {[}Br-Sch07{]},
and became known as \emph{the Breuil-Schneider conjecture}. The \emph{necessity}
of these conditions was proved there in some special cases, and by
Hu {[}Hu09{]} in general. The \emph{sufficiency }tends to be, in the
words of Vigneras {[}V{]}, either ``obvious'' or ``very hard'',
even for $GL_{2}.$

Quite generally, if $\mathbf{G}$ is an arbitrary reductive group,
the simpler $\sigma$ is algebraicly, the harder the question becomes.
An obvious necessary condition is for the central character of $\rho$
to be unitary%
\footnote{A character $\chi:F^{\times}\rightarrow E^{\times}$ is \emph{unitary}
if its values lie in $\mathcal{O}_{E}^{\times}.$%
}. Assume therefore that this is the case. If $\sigma$ is supercuspidal
(its matrix coefficients are compactly supported modulo the center),
the existence of an integral structure is obvious. Using global methods
and the trace formula, existence of an integral structure can also
be proved when $\sigma$, realized over $\Bbb{C}$ by means of some
field embedding $E\hookrightarrow\Bbb{C}$, is essentially discrete
series (its matrix coefficients are square integrable modulo the center)%
\footnote{The notion of ``essentially discrete series'' should be invariant
under $Aut(\Bbb{C}),$ hence independent of the embedding of $E$
in $\Bbb{C}$. This is known for $GL_{n}$ by the work of Bernstein-Zelevinski,
and for the classical groups by Tadic.%
} {[}So13{]}. In these cases, no further restrictions are imposed on
$\tau$. At the other extreme stand principal series representations,
where one should impose severe restrictions on $\tau,$ and the problem
becomes very hard.

We warn the reader that for arithmetic applications, the minimal integral
structures in an irreducible $V_{\rho}$ are often insufficient. In
particular, they may be non-admissible, in the sense that their reduction
modulo the maximal ideal of $\mathcal{O}_{E}$ is a non-admissible
smooth representation over $\Bbb{\bar{F}}_{q}$. In such a case, even
if minimal integral structures are known to exist, the existence of
larger admissible integral structures is a mystery, which is resolved
only in special cases, again by global methods. See {[}Br04{]}.

\subsection{The main result}

We now specialize to $\mathbf{G}=GL_{2}.$ In this case the full Breuil-Schneider
conjecture is known when $F=\Bbb{Q}_{p},$ but only by indirect methods
involving $(\phi,\Gamma)$-modules and Galois representations. It
comes as a by-product of the proof of the $p$-adic local Langlands
correspondence (\emph{pLLC}). This large-scale project {[}B-B-C{]}
depends so far crucially on the assumption $F=\Bbb{Q}_{p}.$ It is
therefore desirable to have a \emph{direct local proof} of the Breuil-Schneider
conjecture, which does not depend on \emph{pLLC}, and which holds
for arbitrary $F$. As mentioned above, if $\sigma$ is either supercuspidal
or special, there are no restrictions on $\tau$ and integral structures
are known to exist. We therefore assume that $\sigma=Ind(\chi_{1},\chi_{2})$
is an irreducible principal series representation.

In this work we prove the Breuil-Schneider conjecture for $GL_{2}(F)$
in the following cases: (1) The characters $\chi_{1}$ and $\chi_{2}$
are unramified, $\tau=\det(.)^{m}\otimes Sym^{n},$ and the weight
is low: $n<q$ (2) The $\chi_{i}$ are tamely ramified, and $\tau=\det(.)^{m}.$
The second case has been done in {[}K-dS12{]} already, but the proof
presented here is somewhat cleaner.

To formulate our theorem, let $\chi_{i}$ be smooth characters of
$F^{\times}$ with values in $E^{\times},$ and $\omega$ the unramified
character%
\footnote{This character is usually denoted $|.|$ over $\Bbb{C}.$ We will
have to consider $|\omega(\pi)|$, the absolute value of $q^{-1}$
as an element of $E$, and we found the notation $||\pi||$ too confusing.%
} for which $\omega(\pi)=q^{-1}$. Let $B$ be the Borel subgroup of
upper triangular matrices in $G,$ and consider the principal series
representation 
\begin{equation}
(V_{\sigma},\sigma)=Ind_{B}^{G}(\chi_{1},\chi_{2}).
\end{equation}
This is the space of functions $f:G\rightarrow E$ for which (i) 
\begin{equation}
f\left(\left(\begin{array}{ll}
t_{1} & s\\
0 & t_{2}
\end{array}\right)g\right)=\chi_{1}(t_{1})\chi_{2}(t_{2})f(g)
\end{equation}
and (ii) there exists an open subgroup $H\subset G,$ depending on
$f,$ such that $f(gh)=f(g)$ for all $h\in H.$ The group $G$ acts
by right translation: 
\begin{equation}
\sigma(g)f(g^{\prime})=f(g^{\prime}g).
\end{equation}
The central character of $\sigma$ is $\chi_{1}\chi_{2}$, and $Ind_{B}^{G}(\chi_{1},\chi_{2})\simeq Ind_{B}^{G}(\omega\chi_{2},\omega^{-1}\chi_{1}),$
unless this representation is reducible. In fact, $\sigma$ is reducible
precisely when $\chi_{1}/\omega\chi_{2}=\omega^{\pm1}.$ In this ``special''
case $\sigma$ is indecomposable of length 2, and its irreducible
constituents are a one-dimensional character and a twist of the Steinberg
representation by a character. Since the Breuil-Schneider conjecture
for a twist of Steinberg, and any $\tau$, is known (for $GL_{2}(F),$
see {[}T93{]} or {[}V08{]}), we exclude this case from now on, and
assume that $\sigma$ is \emph{irreducible}.

Next, fix integers $m$ and $n\ge0,$ and consider the rational representation
\begin{equation}
(V_{\tau},\tau)=\det(.)^{m}\otimes Sym^{n},
\end{equation}
where $Sym^{n}$ denotes the $n$th symmetric power of the standard
representation of $\mathbf{G}$. Put 
\begin{eqnarray}
\lambda & = & \chi_{1}(\pi),\,\,\mu=\omega\chi_{2}(\pi),\\
\tilde{\lambda} & = & \lambda\pi^{m},\,\,\,\tilde{\mu}=\mu\pi^{m}.\notag
\end{eqnarray}
The Breuil-Schneider conjecture for $\rho=\tau\otimes\sigma$ predicts
that $\rho$ has an integral structure if and only if the following
two conditions are satisfied: 
\begin{equation}
\text{(i)\thinspace\thinspace}|\tilde{\lambda}\tilde{\mu}q\pi^{n}|=1\,\,\,\,\text{(ii) }|\tilde{\lambda}|\le|q^{-1}\pi^{-n}|,\,|\tilde{\mu}|\le|q^{-1}\pi^{-n}|.
\end{equation}
Condition (i) means that the central character of $\rho$ is unitary.
Given (i), (ii) is equivalent to $1\le|\tilde{\lambda}|\le|q^{-1}\pi^{-n}|$
or to the symmetric condition for $\tilde{\mu}.$ It is known (and
easy to prove) that these two conditions are necessary.

\begin{theorem} Assume that (i) and (ii) are satisfied. Assume, in
addition, that \emph{either} (1) $\chi_{1}$ and $\chi_{2}$ are unramified
and $n<q,$ \emph{or} (2) that $\chi_{1}$ and $\chi_{2}$ are tamely
ramified and $n=0.$ Then $\rho$ has an integral structure. \end{theorem}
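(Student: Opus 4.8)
The plan is to exhibit an explicit $\mathcal{O}_{E}[G]$-stable lattice inside $V_{\rho}$, working in a Kirillov-type model of $\rho$ in which the action of the Borel subgroup is transparent, so that the whole difficulty is concentrated in one extra group element. As recalled in the introduction, since $\rho$ is irreducible it suffices to produce a single vector $0\neq v\in V_{\rho}$ with $\mathcal{O}_{E}[G]v\neq V_{\rho}$; we will in fact construct a genuine $G$-stable lattice $L$ and may then take $v$ to be any nonzero element of $L$ lying in the smooth part $C_{c}^{\infty}(F^{\times})\otimes V_{\tau}$.

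\emph{Step 1: the locally algebraic Kirillov model.} Start from the Kirillov model $\mathcal{K}(\sigma)$ of the principal series $\sigma$: a space of $E$-valued functions on $F^{\times}$ containing $C_{c}^{\infty}(F^{\times})$ with two-dimensional quotient, on which the mirabolic subgroup $P=\left\{\left(\begin{smallmatrix}a&b\\0&1\end{smallmatrix}\right)\right\}$ acts by $(\left(\begin{smallmatrix}a&b\\0&1\end{smallmatrix}\right)\phi)(x)=\psi(bx)\phi(ax)$ for a fixed additive character $\psi$ of $F$, and the center $Z$ acts through $\chi_{1}\chi_{2}$. Tensoring with $\tau=\det(.)^{m}\otimes Sym^{n}$ and using the $P$-stable filtration of $Sym^{n}$ (graded pieces $a\mapsto a^{j}$, $0\le j\le n$), realize $\rho$ on a space $\mathcal{K}(\rho)$ of $V_{\tau}$-valued functions on $F^{\times}$, containing $C_{c}^{\infty}(F^{\times})\otimes V_{\tau}$, with finite-dimensional quotient whose two boundary blocks (asymptotics as $x\to0$) are governed by $\chi_{1}$ and by $\chi_{2}$ together with $\tau$. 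On $\mathcal{K}(\rho)$ the subgroups $P$ and $Z$ act by the same explicit formulas, now $\tau$-twisted, and together with $w=\left(\begin{smallmatrix}0&1\\-1&0\end{smallmatrix}\right)$ they generate $G$; the action of $w$ is the $\tau$-twisted local functional equation, which after Mellin transform along $F^{\times}$ becomes multiplication by a $\gamma$-factor, equivalently convolution with a Bessel function $j_{\rho}$.

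\emph{Step 2: the candidate lattice and its $B$-stability.} Let $L_{\tau}\subset V_{\tau}$ be the $GL_{2}(\mathcal{O}_{F})$-stable $\mathcal{O}_{E}$-lattice spanned by the monomials, and let $L\subset\mathcal{K}(\rho)$ consist of the $\phi$ supported in $\pi^{-N}\mathcal{O}_{F}$, valued in $L_{\tau}$, and whose asymptotic and Taylor coefficients near $x=0$ lie in $\mathcal{O}_{E}$ relative to a suitable normalization of the $\chi_{1}$- and $\chi_{2}$-leading vectors. The normalization is arranged so that condition (ii) --- i.e. $1\le|\tilde{\lambda}|\le|q^{-1}\pi^{-n}|$ and symmetrically for $\tilde{\mu}$ --- says exactly that the $\chi_{1}$-block and the $\chi_{2}$-block are integral at both ends, while (i) says $Z$ acts through a unitary character. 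Then $L$ is a bona fide $\mathcal{O}_{E}$-lattice (its coefficients are uniformly bounded, so it contains no $E$-line), and a direct check with the explicit formulas shows it is stable under $P$ and under $Z$, hence under $B$: on the smooth part there is nothing to do, while on the boundary blocks the torus element $\left(\begin{smallmatrix}a&0\\0&1\end{smallmatrix}\right)$ acts --- combining the Kirillov scaling with $\tau\left(\begin{smallmatrix}a&0\\0&1\end{smallmatrix}\right)$ --- by multiples of $\tilde{\lambda}$ and $\tilde{\mu}$ on the various graded pieces, and (ii) is precisely the inequality that keeps $L$ inside itself. This is the step carried out, for $n=0$, in {[}K-dS12{]}.

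\emph{Step 3: stability under $w$, the main obstacle.} It remains to show that $\rho(w)$ preserves $L$ up to commensurability; with the Bruhat decomposition $G=B\sqcup BwB$ and $w^{2}\in Z$ this upgrades $B$-stability to $G$-stability and finishes the proof. Concretely one must bound $p$-adically the coefficients appearing when $\rho(w)\phi$ is re-expanded in $\mathcal{K}(\rho)$, i.e. the Mellin coefficients of the relevant $\gamma$-factor, equivalently the values of $j_{\rho}$. In case (1) the $\chi_{i}$ are unramified, so this $\gamma$-factor is elementary --- a ratio of Euler factors built from $\lambda,\mu,q,\pi^{m}$ --- and the new feature is that the $Sym^{n}$-twist turns its residues into finite sums over the residue field $\mathcal{O}_{F}/\pi$, of monomial and binomial type, together with finite geometric series in $\tilde{\lambda}/\tilde{\mu}$; the low-weight hypothesis $n<q$ is used exactly to guarantee that these sums and the denominators they produce are $p$-adic units, so that $\rho(w)$ does not introduce a $p$ in the denominator on $L$. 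In case (2), $n=0$ removes the $Sym^{n}$ contribution but the $\gamma$-factor now carries Gauss sums of the tamely ramified $\chi_{i}$, whose absolute values and integrality are handled as in {[}K-dS12{]} --- more cleanly here, since everything happens inside the Kirillov model rather than the induced model. By a wide margin this last step is the crux: the Borel part of $\rho$ is soft, and the whole arithmetic content --- the denominators in the functional equation, and the interaction of the algebraic twist $Sym^{n}$ with the Bessel function --- is precisely what the hypotheses $n<q$, respectively tameness, are there to control.
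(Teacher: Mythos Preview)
Your strategy differs from the paper's in a way that matters. You propose to build an explicit lattice $L$ in the Kirillov model, check $B$-stability, and then check stability under $w$ via the Bessel function and Bruhat decomposition $G=B\sqcup BwB$. The paper never computes or bounds $\rho(w)$. Instead it uses the Iwasawa decomposition $G=BK$: any cyclic $\mathcal{O}_{E}[G]$-module $V_{\rho}^{0}$ is commensurable with an explicit finitely generated $\mathcal{O}_{E}[B]$-module $\Lambda$ (because $K$ fixes a finite set of generators up to a bounded $\tau(K)$-action), and one then shows $\Lambda\neq V_{\rho}$ directly. This is done by expanding $\phi\in\Lambda$ annulus-by-annulus with Fourier amplitudes $C_{l}(\beta)\in V_{\tau}$, deriving a two-step recursion for $C_{l}(\beta)$ in $l$, and exploiting that when $\phi$ vanishes on $\pi^{l}U_{F}$ the amplitude $C_{l}(\beta)$ depends only on $\pi\beta$; this extra rigidity, together with a factor of $q$ gained from summing $q$ equal terms, propagates the bound $C_{l}(\beta)\in M_{l}(\beta)$ inductively up to $l=0$. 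The hypothesis $n<q$ enters through a lemma on the lattices $N_{l}(\beta)\subset V_{\tau}$ (specifically the inclusion $N_{l}(\beta)\subset N_{l+1}(\pi\beta)$), and tameness enters through the size of Gauss sums in the recursion --- not in any functional equation. Your attribution of Step~2 to [K-dS12] is also off: that paper runs the same annulus-recursion argument and never verifies $w$-stability of a pointwise-bounded lattice.

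Your Step~3 is where the content should be, and it is not carried out: you assert that the $Sym^{n}$-twisted Bessel expansion produces residue-field sums which are units when $n<q$, but you neither write these sums down nor bound them. There is positive reason to doubt this route works with your candidate $L$. The action of $w$ on the Kirillov model is essentially a $p$-adic Fourier transform, which is unbounded, and the paper explicitly remarks that functions in the integral structure $V_{\rho}^{0}$ need \emph{not} have bounded values on $U_{F}$ or at any point --- only their Fourier amplitudes on the \emph{first} nonvanishing annulus are controlled, and only separately in $\beta$. So a lattice defined by ``valued in $L_{\tau}$'' is almost certainly not $G$-stable, and whatever lattice \emph{is} $G$-stable is not describable by a pointwise condition against which you could test $\rho(w)\phi$. (Your definition of $L$ is also unstable under $B$ as written: fixing the support in $\pi^{-N}\mathcal{O}_{F}$ is incompatible with dilation.) The paper's detour through $\mathcal{O}_{E}[B]$-modules is precisely what lets one avoid ever confronting $\rho(w)$.
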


Although our method is new, and gives some new insight into the minimal
integral structure (see Theorem 1.2 below), the two cases have been
known before: case (1) by Breuil {[}Br03{]} (for $\Bbb{Q}_{p}$) and
de Ieso {[}dI12{]} (for general $F$), and case (2) by Vigneras {[}V08{]}.
It is interesting to note that the restriction $n<q$ in case (1)
and the restriction on tame ramification in case (2) are also needed
in the above mentioned works. In fact, Breuil, de Ieso and Vigneras
all use, in one way or another, the method of \emph{compact induction},
replacing the representation $\rho$ by a local system on the tree
of $G$. Our approach takes place in a certain \emph{dual} space of
functions on $F.$ Any attempt to translate it to the set-up of the
tree involves the $p$-adic Fourier transform, which is unbounded,
and makes it impossible to trace back the arguments. The way in which
the weight and ramification restrictions are brought to bear on the
problem are also not similar, yet the very same restrictions turn
out to be necessary for the proofs to work.

\subsection{An outline of the proof}

As in {[}K-dS12{]}, our approach is based on a study of the Kirillov
model of $\rho.$ For the sake of exposition we now exclude the case
$\chi_{1}=\omega\chi_{2}$, which requires special attention. Assuming
$\chi_{1}\neq\omega\chi_{2},$ the \emph{Kirillov model} of $\rho$
is then the following space of functions on $F-\left\{ 0\right\} $:
\begin{equation}
\mathcal{K}=C_{c}^{\infty}(F,\tau)\chi_{1}+C_{c}^{\infty}(F,\tau)\omega\chi_{2}.
\end{equation}
Here $C_{c}^{\infty}(F,\tau)$ is the space of $V_{\tau}$-valued
locally constant functions of compact support on $F.$ The model $\mathcal{K}$
is obtained by tensoring $\tau$ with the classical Kirillov model
of the smooth representation $\sigma\ $(see {[}Bu98{]}). It contains
$\mathcal{K}_{0}=C_{c}^{\infty}(F^{\times},\tau),$ the subspace of
functions vanishing near 0, and $\mathcal{K}/\mathcal{K}_{0}$ consists
of two copies of $V_{\tau}.$ When $\tau=1,$ this is just the \emph{Jacquet
module} of $\mathcal{K}.$ The characters $\chi_{1}$ and $\omega\chi_{2}$
are the \emph{exponents} of the Jacquet module, the two characters
by which the torus of diagonal matrices acts on it.

We record the action of an element 
\begin{equation}
g=\left(\begin{array}{ll}
a & b\\
0 & 1
\end{array}\right)\in B
\end{equation}
on $\phi\in\mathcal{K}.$ Fix an additive character $\psi:F\rightarrow E^{\times}$
under which $\mathcal{O}_{F}$ is its own annihilator. Then 
\begin{equation}
\rho(g)\phi(x)=\tau(g)\left(\psi(bx)\phi(ax)\right).
\end{equation}
The action of $G$ in the model $\mathcal{K}$ depends on the choice
of $\psi,$ but only up to isomorphism.

At this point, we must introduce more notation and recall some easy
facts. Let $1_{S}$ be the characteristic function of $S\subset F,$
and $\phi_{l}=1_{\pi^{l}U_{F}}$ ($l\in\Bbb{Z}$). If $b\in F$, write
$\psi_{b}(x)=\psi(bx).$ The function $\psi_{b}(\pi^{-l}x)\phi_{l}(x)$
depends only on $\beta,$ the image of $b$ in $W=F/\mathcal{O}_{F}$,
so from now on we denote it by $\psi_{\beta}(\pi^{-l}x)\phi_{l}(x).$
Any locally constant function on the annulus $\pi^{l}U_{F}$ can be
expanded as a finite linear combination of these functions. Moreover,
Fourier analysis on the disk $\pi^{l}\mathcal{O}_{F}$ implies that
\begin{equation}
\sum_{\beta\in W}C_{l}(\beta)\psi_{\beta}(\pi^{-l}x)\phi_{l}(x)=0
\end{equation}
if and only if $C_{l}(\beta)$ depends only on $\pi\beta$, i.e. 
\begin{equation}
C_{l}(\beta)=C_{l}(\beta^{\prime})\text{ if }\beta-\beta^{\prime}\in W_{1}=\pi^{-1}\mathcal{O}_{F}/\mathcal{O}_{F}.
\end{equation}
The same applies of course to $V_{\tau}$-valued functions, except
that now the coefficients $C_{l}(\beta)\in V_{\tau}.$

An arbitrary function $\phi\in\mathcal{K}$ may be expanded annulus-by-annulus
as 
\begin{equation}
\phi=\sum_{l=l_{0}}^{\infty}\sum_{\beta\in W}C_{l}(\beta)\psi_{\beta}(\pi^{-l}x)\phi_{l}(x),
\end{equation}
where $C_{l}(\beta)\in V_{\tau},$ and for every $l$ only finitely
many $C_{l}(\beta)\neq0.$ The only restriction on $\phi$ is imposed
by the asymptotics as $x\rightarrow0.$ In particular, finite linear
combinations as above represent the elements of $\mathcal{K}_{0}$.
One should think of the $\beta$ as frequencies, and of the $C_{l}(\beta)$
as the amplitudes attached to these frequencies on the annulus $\pi^{l}U_{F}.$
These amplitues are not uniquely defined since we may add to $C_{l}(\beta)$
a perturbation $\tilde{C}_{l}(\beta)$ without affecting $\phi|\pi^{l}U_{F},$
provided $\tilde{C}_{l}(\beta)=\tilde{C}_{l}(\beta^{\prime})$ whenever
$\beta-\beta^{\prime}\in W_{1}.$ But as explained above, this is
the only ambiguity.

Theorem 1.1 follows from the following more precise result, which
makes the integral structure on $V_{\rho}$ ``visible''.

\begin{theorem} Let the assumptions be as in Theorem 1.1. Let $V_{\rho}^{0}$
be the $\mathcal{O}_{E}[G]$-submodule of $V_{\rho}=\mathcal{K}$
spanned by a non-zero vector. Then there exist $\mathcal{O}_{E}$-lattices
$M_{0}(\beta)\subset V_{\tau}$ such that if $\phi\in V_{\rho}^{0}$
vanishes outside $\mathcal{O}_{F},$ it has an expansion as above
with $C_{0}(\beta)\in M_{0}(\beta)$ for every $\beta.$ \end{theorem}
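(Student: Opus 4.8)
The plan is to reduce to a single convenient generator, to restate the theorem as a boundedness assertion, and then to control the two mechanisms that move amplitudes between annuli: dilation inside $B$, and the Weyl element. Since $\mathcal{K}$ is irreducible over $E$, any two nonzero cyclic $\mathcal{O}_{E}[G]$-submodules are commensurable, and a scalar multiple of a lattice is again a lattice, so it is enough to prove the statement for one generator; I would take $\xi_{0}=v_{0}\cdot 1_{U_{F}}$ with $v_{0}$ a lowest weight vector of $Sym^{n}$ (the $\det(.)^{m}$ twist is harmless). This $\xi_{0}$ lies in $\mathcal{K}$, is supported in $\mathcal{O}_{F}$, and generates $\mathcal{K}$ over $\mathcal{O}_{E}[G]$ because any nonzero vector does. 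Write $\mathcal{K}^{\le 0}\subseteq\mathcal{K}$ for the functions vanishing outside $\mathcal{O}_{F}$. The map $\phi\mapsto\phi|_{U_{F}}\in C^{\infty}(U_{F},V_{\tau})$ records exactly the level-$0$ coefficients $(C_{0}(\beta))_{\beta}$, up to the $W_{1}$-periodic ambiguity, so the theorem is equivalent to the assertion that the $\mathcal{O}_{E}$-submodule $N=\{\phi|_{U_{F}}:\phi\in V_{\rho}^{0}\cap\mathcal{K}^{\le 0}\}$ of $C^{\infty}(U_{F},V_{\tau})$ is bounded, i.e. that each of its frequency components $N_{\beta}\subseteq V_{\tau}$ is contained in a lattice. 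Clearing denominators shows $E\cdot N=C^{\infty}(U_{F},V_{\tau})$, so this boundedness is all one can ask.

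By the Bruhat decomposition $G=B\sqcup BwB$, with $w=\left(\begin{smallmatrix}0&1\\-1&0\end{smallmatrix}\right)$, every element of $V_{\rho}^{0}$ is an $\mathcal{O}_{E}$-combination of $B$-translates $\rho(b)\xi_{0}$ and of translates $\rho(b_{1})\rho(w)\rho(b_{2})\xi_{0}$. The $B$-translates are transparent from $\rho\!\left(\begin{smallmatrix}a&b\\0&1\end{smallmatrix}\right)\phi(x)=\tau\!\left(\begin{smallmatrix}a&b\\0&1\end{smallmatrix}\right)\!\big(\psi(bx)\phi(ax)\big)$: they remain in $\mathcal{K}_{0}$, carry the annulus $U_{F}$ to $a^{-1}U_{F}$, are integral when $a\in\mathcal{O}_{F}^{\times}$ and $b\in\mathcal{O}_{F}$, and cost a power of $\pi$ --- bounded by condition (ii) --- per annulus one dilates across. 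The weight and ramification hypotheses enter decisively at this point: for $v_{0}$ to generate, under the integral group ring of $\left(\begin{smallmatrix}\mathcal{O}_{F}^{\times}&\mathcal{O}_{F}\\0&1\end{smallmatrix}\right)$, a full-rank $\mathcal{O}_{E}$-lattice in $V_{\tau}$, one needs the $n+1$ multiplicative characters $\bar a\mapsto\bar a^{0},\dots,\bar a^{n}$ of $\mathbb{F}_{q}^{\times}$, together with the unipotent binomial coefficients, to be non-degenerate over $\mathbb{F}_{q}$ --- a Vandermonde condition forcing $n+1\le q$ in case (1); in case (2), where $n=0$, the analogous point is that the Gauss sums attached to $\chi_{1},\chi_{2}$ have controlled valuation only when those characters are tame. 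One thereby produces an explicit lattice $L\subseteq V_{\rho}^{0}\cap\mathcal{K}_{0}$; working modulo $L$ and modulo functions supported on $\pi\mathcal{O}_{F}$ (which do not affect the restriction to $U_{F}$), the problem is localized in the combinations of $BwB$-translates.

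The heart of the proof, and where I expect the main obstacle to lie, is the action of $w$. In the Kirillov model $\rho(w)$ is a $p$-adic Fourier transform: it sends a standard function $v\,\psi_{\beta}(\pi^{-l}x)\phi_{l}(x)$ to a combination of standard functions at other levels, with coefficients that are local Gauss sums twisted by $\tau(w)$ and by the $\gamma$-factors of $\chi_{1}$ and $\chi_{2}$; and, crucially, it is \emph{unbounded} --- as $l\to\pm\infty$ the output level and the valuation of the output amplitude move in opposite directions. This is exactly why the hypothesis that $\phi$ vanish outside $\mathcal{O}_{F}$ cannot be dropped: without it even the Whittaker functional $\phi\mapsto\phi(1)$ is unbounded on $V_{\rho}^{0}$. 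Conditions (i) and (ii) are precisely what tame this transform on the region we actually need. Condition (i), the unitarity of the central character ($|\tilde\lambda\tilde\mu q\pi^{n}|=1$), forces the central twists and the relevant $\gamma$-factors to be units; and condition (ii), which traps each twisted exponent between $1$ and $|q^{-1}\pi^{-n}|$ in absolute value, is the inequality ensuring that the slope with which $\rho(w)$ transports valuation across annuli, combined with the dilation slope already present in the $B$-translates, leaves no room to inflate the amplitude on the single annulus $U_{F}$ while keeping every outer annulus empty. As indicated in the introduction, one can organize this calculation inside a dual model of $\rho$ --- a mirror space of functions on $F$ in which $w$ acts by the simple formula and the torus by a Fourier transform --- whose advantage is that the argument never has to be traced back across the unbounded transform.

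Assembling: one shows that for every finite $\mathcal{O}_{E}$-combination $\phi=\ell+\sum_{i}c_{i}\,\rho(b_{1,i})\rho(w)\rho(b_{2,i})\xi_{0}$ with $\ell\in L$ that happens to be supported in $\mathcal{O}_{F}$, the restriction $\phi|_{U_{F}}$ lies, frequency by frequency, in a single fixed lattice $M_{0}(\beta)\subseteq V_{\tau}$; since $V_{\rho}^{0}\cap\mathcal{K}^{\le 0}$ is $\mathcal{O}_{E}$-spanned by such $\phi$, this bounds $N$ and proves the theorem. The difficulty to be overcome is exactly that the individual terms $\rho(b_{1,i})\rho(w)\rho(b_{2,i})\xi_{0}$ may carry arbitrarily large level-$0$ content because the $p$-adic Fourier transform is unbounded, and one must show that the cancellations forced by $\mathrm{supp}(\phi)\subseteq\mathcal{O}_{F}$ can never amplify that content. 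Making this precise requires careful bookkeeping of which standard functions occur in $\rho(w)$ of which, and with what valuations, arranged so that conditions (i), (ii) and the weight/ramification hypothesis can all be brought to bear; this is the technical core. Finally, the excluded case $\chi_{1}=\omega\chi_{2}$ has to be handled separately, since there the two exponents of the Jacquet module of $\mathcal{K}$ coincide, the module is non-split, and the bookkeeping acquires an extra, logarithmic, term.
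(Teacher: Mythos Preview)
Your plan diverges from the paper at the very first structural step, and this is where the real gap lies. You decompose $G$ via Bruhat, $G=B\sqcup BwB$, and then propose to control the action of the Weyl element $w$ in the Kirillov model. The paper uses the Iwasawa decomposition $G=BK$ instead: since $K=GL_{2}(\mathcal{O}_{F})$ stabilizes a lattice in $V_{\tau}$ and fixes the generators $v_{j}$ up to a finite group, one shows (Proposition~2.3) that the cyclic $\mathcal{O}_{E}[G]$-module $V_{\rho}^{0}$ is \emph{commensurable} with an explicit finitely generated $\mathcal{O}_{E}[B]$-module $\Lambda$. Thus the Weyl element never enters; the entire proof takes place inside $B$, whose action in the Kirillov model is the transparent formula (1.10). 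The ``technical core'' you leave open --- understanding $\rho(w)$ as a $p$-adic Fourier/Bessel transform and tracking valuations through it --- is precisely what the paper's method is designed to avoid, and the introduction explicitly flags this transform as unbounded and intractable for the purpose.

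Once reduced to $\Lambda$, the paper's argument is also quite different in shape from what you sketch. Every $\phi\in\Lambda$ has an expansion $\phi=\sum_{k,\beta}c_{k}'(\beta)\psi_{\beta}(-\pi^{-k}x)F_{k}'(x)+c_{k}''(\beta)\psi_{\beta}(-\pi^{-k}x)F_{k}''(x)$ with $c_{k}',c_{k}''$ in the lattice $\pi^{-km}N_{k}(\beta)$. Rewriting annulus-by-annulus gives amplitudes $C_{l}(\beta)$ satisfying an explicit two-step recursion (Lemma~3.1). One then proves by increasing induction on $l$, for $k_{0}\le l\le 0$, that $C_{l}(\beta)\in M_{l}(\beta)=q^{-1}\pi^{-n-lm}N_{l}(\beta)$. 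The hypothesis that $\phi$ vanishes outside $\mathcal{O}_{F}$ is used, not to tame a Fourier transform, but to force $C_{l}(\beta)$ (for $l<0$) to depend only on $\pi\beta$; this turns sums $\sum_{\pi\alpha=\beta}C_{l-1}(\alpha)$ into $q\cdot C_{l-1}(\alpha_{\beta})$ and, via Lemma~2.2(i), places that common value in $\pi^{n}N_{l}(\beta)$. These two gains of $q$ and $\pi^{n}$ exactly offset the losses from $|\lambda|,|\mu|\le |q^{-1}\pi^{-n-m}|$ in the recursion. The restriction $n<q$ enters only through Lemma~2.2(iii), the inclusion $N_{l}(\beta)\subset N_{l+1}(\pi\beta)$, not through any Vandermonde argument about generating $V_{\tau}$ from a lowest-weight vector.

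In short: your reduction to a single generator and the reformulation in terms of boundedness of the level-$0$ amplitudes are fine, but attacking $w$ head-on is both unnecessary and is the hard route the paper deliberately sidesteps. The missing idea is the $BK$-commensurability trick that lets one work entirely with the Borel.
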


Note that we do not claim that the values of $\phi\in V_{\rho}^{0}$
are bounded on $U_{F},$ nor at any other point. The amplitudes can
be bounded only separately, and only on the first annulus where $\phi$
does not vanish. Since the $C_{0}(\beta)$ are not uniquely defined,
one still needs a simple argument to show that this is good enough.

\begin{proposition} Theorem 1.2 implies Theorem 1.1. \end{proposition}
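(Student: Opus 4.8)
The plan is to show that the cyclic module $V_\rho^0 = \mathcal{O}_E[G]v$ is a genuine integral structure, i.e.\ that it is properly contained in $V_\rho = \mathcal{K}$. Since $\rho = \tau\otimes\sigma$ is irreducible (as $\tau$ and $\sigma$ are irreducible, using Prakash's theorem quoted in the introduction), it was already observed that any nonzero proper $\mathcal{O}_E[G]$-submodule of $V_\rho$ is automatically an integral structure; so it suffices to exhibit a single element of $\mathcal{K}$ that is \emph{not} in $V_\rho^0$. I would produce such an element among the functions supported on $\mathcal{O}_F$.

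First I would fix the lattices $M_0(\beta)\subset V_\tau$ provided by Theorem 1.2. Note that although the amplitudes $C_0(\beta)$ in the expansion of a given $\phi$ are not unique, the ambiguity is completely controlled: by the Fourier-analysis fact recalled before the statement of Theorem 1.2, two admissible amplitude systems $(C_0(\beta))$ and $(C_0'(\beta))$ for the same $\phi|_{\mathcal{O}_F}$ differ by a perturbation $\tilde C_0(\beta)$ with $\tilde C_0(\beta)=\tilde C_0(\beta')$ whenever $\beta-\beta'\in W_1$. In particular, $C_0(\beta) - C_0'(\beta)$ is \emph{constant} on each coset of $W_1$ in $W=F/\mathcal{O}_F$. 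The key point is then to choose, for a candidate test function $\phi$, amplitudes that make the membership condition ``$C_0(\beta)\in M_0(\beta)$ for all $\beta$'' visibly fail no matter how one resolves the ambiguity.

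Concretely, I would take $\phi = c\cdot\psi_\beta(x)\phi_0(x)$ — a single oscillatory function on the unit annulus $U_F$ — with amplitude vector $c\in V_\tau$ and frequency $\beta$ chosen so that $c\notin M_0(\beta)$, and moreover so that $c$ still escapes $M_0(\beta)$ after subtracting any vector that is forced to be common to the whole $W_1$-coset of $\beta$. One can always arrange this: pick $\beta$ with $\beta\notin W_1$ (so the coset of $\beta$ is distinct from that of $0$), and scale $c$ by a sufficiently negative power of $\pi$ relative to the (finitely many) lattices $M_0(\beta')$ that come into play; since the admissible perturbation is pinned down on the coset of $0$ and only shifts $C_0(\beta)$ by something determined elsewhere, a large enough scaling of $c$ cannot be absorbed into $M_0(\beta)$. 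This exhibits $\phi\in\mathcal{K}\setminus V_\rho^0$, hence $V_\rho^0\subsetneq V_\rho$, and $\rho$ has an integral structure.

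The main obstacle is the non-uniqueness of the amplitudes: one must argue carefully that the failure ``$C_0(\beta)\notin M_0(\beta)$'' is \emph{robust} under the allowed perturbations, rather than an artifact of a bad choice of expansion. This is exactly the ``simple argument'' the authors flag after Theorem 1.2. The cleanest way to handle it is to fix, once and for all, a set of coset representatives for $W_1$ in $W$, normalize the expansion of any $\phi|_{\mathcal{O}_F}$ so that $C_0(\beta)=0$ for $\beta$ in a complementary set (using the $W_1$-translation ambiguity to concentrate mass on the chosen representatives), and observe that on the normalized representatives the amplitudes \emph{are} unique; Theorem 1.2 then reads as a genuine lattice condition on a finite-dimensional space, which a single sufficiently large test vector violates. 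Everything else is bookkeeping.
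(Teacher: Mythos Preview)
Your overall strategy is the same as the paper's: show $V_\rho^0 \neq V_\rho$ by exhibiting a function supported on $U_F$ whose amplitudes on that annulus cannot all be placed in the lattices $M_0(\beta)$, even after absorbing the $W_1$-coset-constant ambiguity. However, your execution overcomplicates matters and contains a confused step. You pick a frequency $\beta \notin W_1$ and then assert that ``the admissible perturbation is pinned down on the coset of $0$ and only shifts $C_0(\beta)$ by something determined elsewhere.'' This is not right: the perturbation on each $W_1$-coset is an \emph{independent} free constant, so nothing on the coset of $\beta$ is pinned down by what happens on the coset of $0$. What you actually need is the lattice condition
\[
c \notin M_0(\beta) + \bigcap_{\beta'\in(\beta+W_1)\setminus\{\beta\}} M_0(\beta'),
\]
which can indeed be arranged, but which your ``scale by a sufficiently negative power of $\pi$'' argument neither states nor verifies.

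The paper's proof avoids all of this by choosing the simplest test function: $\phi = C\phi_0$, the constant $C \in V_\tau$ on $U_F$ (frequency $0$). If this $\phi$ admitted an expansion with every $C_0(\beta) \in M_0(\beta)$, then comparing with the obvious expansion (only $C_0(0)=C$ nonzero) forces $C_0(0) - C = C_0(\beta')$ for all $\beta' \in W_1\setminus\{0\}$, whence $C = C_0(0) - C_0(\beta') \in M_0(0) + M_0(\beta')$. So choosing $C \notin \sum_{\beta\in W_1} M_0(\beta)$ gives an immediate contradiction. No normalization scheme or scaling is needed; the whole argument is two lines once you take $\beta = 0$ rather than $\beta \notin W_1$.
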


\begin{proof} We shall show that $V_{\rho}^{0}\neq V_{\rho},$ so
in view of the irreducibility of $\rho,$ $V_{\rho}^{0}$ will be
an integral structure. Consider the function $\phi=C\phi_{0}$ where
$C\in V_{\tau}$ \emph{lies outside} $M=\sum_{\beta\in W_{1}}M_{0}(\beta).$
Suppose, by way of contradiction, that $\phi\in V_{\rho}^{0}.$ Then
$\phi$ is also given by an expansion as in Theorem 1.2. For $x\in U_{F}$
we must have then 
\begin{equation}
C=\sum_{\beta\in W}C_{0}(\beta)\psi_{\beta}(x).
\end{equation}
This forces, as we have seen, the equality $C_{0}(0)-C=C_{0}(\beta)$
for $\beta\in W_{1}-\left\{ 0\right\} .$ But this contradicts the
choice of $C$. \end{proof}

We now make some comments on the proof of Theorem 1.2. The first step
is standard. Using the decomposition $G=BK,$ $K=GL_{2}(\mathcal{O}_{F}),$
we show that $V_{\rho}^{0}$ is commensurable with a certain $\mathcal{O}_{E}[B]$-module
of finite type $\Lambda$ which also spans $V_{\rho}$ over $E$.
We may therefore prove the assertion of the theorem for $\Lambda$
instead of $V_{\rho}^{0}.$ Our $\Lambda$ will be spanned over $\mathcal{O}_{E}$
by an explicit infinite set $\mathcal{E}$ of nice functions.

Pick a $\phi\in\Lambda,$ express it as a linear combination of the
functions in $\mathcal{E}$, and expand it annulus-by-annulus as above.
The coefficients $C_{l}(\beta)$ then satisfy \emph{recursive relations},
in which the coefficients used to express $\phi$ as a linear combination
of $\mathcal{E}$ figure out.

Suppose that $\phi$ vanishes off $\mathcal{O}_{F}.$ It may still
be the case that $C_{l}(\beta)\neq0$ for some $\beta$ and $l<0.$
However, cancellation must take place, and as we have seen, $C_{l}(\beta)$
depends then, for $l<0,$ on $\pi\beta$ only. We proceed by increasing
induction on $l$ and show that $C_{l}(\beta)$ must belong, for $l\le0,$
to a certain $\mathcal{O}_{E}$-lattice $M_{l}(\beta)\subset V_{\tau},$
depending on $l$ and $\beta,$ but not on $\phi.$ When $l=0$ we
reach the desired conclusion.

Two phenomena assist us in establishing these bounds on the coefficients.
The first, which has already been utilized in our previous work {[}K-dS12{]},
is that in the recursive relations for $C_{l}(\beta)$ we encounter
terms such as 
\begin{equation}
\sum_{\pi\alpha=\beta}C_{l-1}(\alpha).
\end{equation}
As long as $l\le0,$ the $q$ summands are all equal, so their sum
is equal to $qC_{l-1}(\alpha_{\beta}),$ where $\alpha_{\beta}$ is
any one of the $\alpha$'s. The factor $q$ is small, and helps to
control $C_{l}(\beta).$

The second phenomenon is new, and more subtle. The information that
$C_{l}(\beta)$ depends only on $\pi\beta,$ puts a further restriction
on $C_{l}(\beta),$ beyond lying in $M_{l}(\beta),$ which is vital
for the deduction that the $C_{l+1}(\gamma)$ lie in $M_{l+1}(\gamma).$
For example, assume that $m=0$ and $n=1,$ so $\tau$ is the standard
representation of $G$ on $E^{2}$, and let $e_{1}$ and $e_{2}$
be the standard basis. In this example, up to scaling, 
\begin{equation}
M_{l}(\beta)=Span_{\mathcal{O}_{E}}\left\{ \pi^{-l}e_{1},e_{2}-\pi^{-l}\beta e_{1}\right\} 
\end{equation}
(note that this is indeed well defined, i.e. depends only on $\beta\func{mod}\mathcal{O}_{F}$).
It is easily checked that if $C_{l}(\beta)\in M_{l}(\beta)$ for all
$\beta,$ and \emph{in addition, }$C_{l}(\beta)$ depends only on
$\pi\beta,$ then in fact 
\begin{equation}
C_{l}(\beta)\in Span_{\mathcal{O}_{E}}\left\{ \pi^{-l}e_{1},\pi(e_{2}-\pi^{-l}\beta e_{1})\right\} .
\end{equation}
This minor improvement on $C_{l}(\beta)\in M_{l}$ is crucial for
our method to work. Roughly speaking, the first phenomenon described
above takes care of the factor $q^{-1}$ in condition (1.7)(ii), while
the second one takes care of the $\pi^{-n}.$

The inductive procedure requires also the relation $M_{l}(\beta)\subset M_{l+1}(\pi\beta).$
It is here that we need the condition $n<q.$ We may modify the definition
of $M_{l}(\beta)$ to guarantee this relation without any restriction
on $n,$ but we then lose the subtle phenomenon to which we alluded
in the previous paragraph. At present, we are unable to hold the rope
at both ends simultaneously.

When $\chi_{1}$ and $\chi_{2}$ are unramified this is the end of
the story. When $\chi_{1}$ and $\chi_{2}$ are ramified, two types
of complications occur. First, we must give up the algebraic part
$\tau$ (except for the benign twist by the determinant). Second,
in the recursive relations used to define $C_{l}(\beta),$ Gauss sums
intervene. These Gauss sums have denominators which are still under
control if the characters are only tamely ramified, but if the $\chi_{i}$
are wildly ramified, our method breaks down. It is interesting to
note that the well-known estimates on Gauss sums intervene also in
Vigneras' proof of the tamely-ramified smooth case of the conjecture.

In the remaining cases, not covered by (1) or (2), it is possible
that Theorem 1.2 fails, yet Theorem 1.1 continues to hold, for a different
reason. It will be interesting to check numerically whether one should
expect Theorem 1.2 in general. Even for $F=\Bbb{Q}_{p},$ where, as
mentioned above, the full conjecture is known, it is unclear to us
whether Theorem 1.2 holds beyond cases (1) and (2).

\section{Preliminary results}

\subsection{Fourier analysis on $\mathcal{O}_{F}$}

The discrete group $W=F/\mathcal{O}_{F}$ is the topological dual
of $\mathcal{O}_{F}$ via the pairing 
\begin{equation}
(\beta,x)\mapsto\psi_{\beta}(x)=\psi(\beta x).
\end{equation}
Every locally constant $E$-valued function on $\mathcal{O}_{F}$
has a unique finite Fourier expansion 
\begin{equation}
\phi=\sum_{\beta\in W}c(\beta)\psi_{\beta}(x).
\end{equation}
The proof of the following easy lemma is left to the reader.

\begin{lemma} (i) $\phi|U_{F}=0$ if and only if $c(\beta)$ depends
only on $\pi\beta.$

(ii) $\phi|\pi\mathcal{O}_{F}=0$ if and only if $\sum_{\pi\beta=\gamma}c(\beta)=0$
for every $\gamma\in W.$ \end{lemma}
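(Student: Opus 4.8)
The plan is to deduce both statements from two elementary observations about the quotient group $W=F/\mathcal{O}_F$ and its subgroup $W_1=\pi^{-1}\mathcal{O}_F/\mathcal{O}_F$ (of order $q$), together with the uniqueness of the finite Fourier expansion recorded above. First I would note that multiplication by $\pi$ descends to a group homomorphism $W\to W$, $\beta\mapsto\pi\beta$, which is surjective (as $\pi\cdot(\pi^{-1}\gamma)=\gamma$) with kernel exactly $W_1$; hence for each $\gamma\in W$ the fibre $\{\beta\in W:\pi\beta=\gamma\}$ is a nonempty coset of $W_1$, so in particular ``$c(\beta)$ depends only on $\pi\beta$'' means precisely ``$c$ is constant on cosets of $W_1$''. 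Second, I would record that the Fourier expansion on $\mathcal{O}_F$ of the characteristic function $1_{\pi\mathcal{O}_F}$ is $\frac{1}{q}\sum_{\delta\in W_1}\psi_\delta$; equivalently $\sum_{\delta\in W_1}\psi_\delta(x)$ equals $q$ on $\pi\mathcal{O}_F$ and $0$ on $U_F$, which is just orthogonality of characters on the finite group $\mathcal{O}_F/\pi\mathcal{O}_F$ (whose dual is $\{\psi_\delta\mid\delta\in W_1\}$). I will also use that multiplying $\phi$ by $\psi_\delta$ shifts its Fourier coefficients, so that $\psi_\delta\phi$ has expansion $\sum_\gamma c(\gamma-\delta)\psi_\gamma$.

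For (i): if $\phi|U_F=0$ then $\phi$ is supported on $\pi\mathcal{O}_F$, where each $\psi_\delta$ with $\delta\in W_1$ is identically $1$; hence $\psi_\delta\phi=\phi$ as functions on $\mathcal{O}_F$, and comparing Fourier coefficients yields $c(\beta-\delta)=c(\beta)$ for all $\beta$ and all $\delta\in W_1$, i.e. $c$ is constant on $W_1$-cosets. Conversely, if $c$ is constant on $W_1$-cosets then $\psi_\delta\phi$ and $\phi$ have equal Fourier coefficients for each $\delta\in W_1$, so $\psi_\delta\phi=\phi$ by uniqueness; averaging over $\delta\in W_1$ gives $\phi=\big(\frac{1}{q}\sum_{\delta\in W_1}\psi_\delta\big)\phi=1_{\pi\mathcal{O}_F}\cdot\phi$, whence $\phi$ vanishes off $\pi\mathcal{O}_F$, in particular on $U_F$.

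For (ii): I would substitute $x=\pi y$ with $y$ running over $\mathcal{O}_F$. Then $\phi(\pi y)=\sum_{\beta\in W}c(\beta)\psi(\pi\beta\,y)=\sum_{\gamma\in W}\big(\sum_{\pi\beta=\gamma}c(\beta)\big)\psi_\gamma(y)$, where the finite sum has been grouped according to the value $\gamma=\pi\beta$, using the surjectivity of $\pi\colon W\to W$ (each fibre a coset of $W_1$). This is the Fourier expansion on $\mathcal{O}_F$ of the locally constant function $y\mapsto\phi(\pi y)$, so by uniqueness it vanishes identically --- that is, $\phi|\pi\mathcal{O}_F=0$ --- if and only if every coefficient $\sum_{\pi\beta=\gamma}c(\beta)$ is zero, which is the stated condition.

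I do not expect a genuine obstacle here: the lemma is elementary, as its placement indicates. The only points needing a little care are the well-definedness and surjectivity of multiplication by $\pi$ on the quotient $W$ (underlying the grouping in (ii) and the kernel description in (i)), the remark in (i) that $\psi_\delta\equiv 1$ on $\pi\mathcal{O}_F$ for $\delta\in W_1$, and the fact that all Fourier sums in sight are finite, so that the reindexings and the appeals to uniqueness are unproblematic.
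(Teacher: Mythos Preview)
Your argument is correct. The paper itself does not prove this lemma: it simply says ``The proof of the following easy lemma is left to the reader,'' so there is nothing to compare against, and your write-up is exactly the kind of verification the authors intended the reader to supply.
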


The lemma is immediately translated to a similar one in the disk $\pi^{l}\mathcal{O}_{F}$
using the functions $\psi_{\beta}(\pi^{-l}x)$ as a basis for the
expansion.

\subsection{Lattices in $V_{\tau}$}

If $\beta\in W$ and $l\in\Bbb{Z}$ let 
\begin{equation}
D_{l}(\beta)=\left\{ u\in F|\,|u-\pi^{-l}\beta|\le|\pi^{-l}|\right\} .
\end{equation}
This disk indeed depends only on $\beta\func{mod}\mathcal{O}_{F}.$
Note that 
\begin{equation}
D_{l+1}(\gamma)=\coprod_{\pi\beta=\gamma}D_{l}(\beta).
\end{equation}

Let $\tau=\det(.)^{m}\otimes Sym^{n}.$ Identify $V_{\tau}$ with
$E[u]^{\le n},$ the space of polynomials of degree at most $n,$
with the action 
\begin{equation}
\tau\left(\left(\begin{array}{ll}
a & b\\
c & d
\end{array}\right)\right)u^{i}=(ad-bc)^{m}(a+cu)^{n-i}(b+du)^{i}.
\end{equation}

Let 
\begin{equation}
N_{l}(\beta)=\left\{ P\in V_{\tau}|\,|P(u)|\le|\pi|^{-nl}\,\,\forall u\in D_{l}(\beta)\right\} .
\end{equation}
These are lattices in $V_{\tau}.$

\begin{lemma} (i) For any $\gamma\in W$
\begin{equation}
\bigcap_{\pi\beta=\gamma}N_{l}(\beta)=\pi^{n}N_{l+1}(\gamma).
\end{equation}
(ii) Assume that $n<q.$ Then 
\begin{equation}
N_{l}(\beta)=Span_{\mathcal{O}_{E}}\left\{ (\pi^{-l})^{n-i}(u-\pi^{-l}\beta)^{i}\,\,\,(0\le i\le n)\right\} .
\end{equation}
(iii) Assume that $n<q.$ Then 
\begin{equation}
N_{l}(\beta)\subset N_{l+1}(\pi\beta).
\end{equation}
\end{lemma}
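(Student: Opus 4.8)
The plan is to treat the three parts in the order (i), (ii), (iii): part (i) is purely formal, part (ii) carries the content, and part (iii) drops out of (ii). For (i) I would rewrite both sides as pointwise bounds on the values of a polynomial $P$. By the disjoint decomposition $D_{l+1}(\gamma)=\coprod_{\pi\beta=\gamma}D_l(\beta)$ recorded above, membership of $P$ in $\bigcap_{\pi\beta=\gamma}N_l(\beta)$ is exactly the condition $|P(v)|\le|\pi|^{-nl}$ for all $v\in D_{l+1}(\gamma)$; and $P\in\pi^nN_{l+1}(\gamma)$ means $|\pi^{-n}P(v)|\le|\pi|^{-n(l+1)}$ on $D_{l+1}(\gamma)$, i.e. $|P(v)|\le|\pi|^{-nl}$ on $D_{l+1}(\gamma)$. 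The two conditions coincide, so (i) holds with no hypothesis on $n$.

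For (ii) I would expand $P\in V_\tau=E[u]^{\le n}$ in the Taylor basis at the centre $\pi^{-l}\beta$ of $D_l(\beta)$, writing $P(u)=\sum_{i=0}^{n}d_i(u-\pi^{-l}\beta)^i$ with $d_i\in E$. This requires fixing a lift of $\beta$; a different lift changes the $d_i$ by a unipotent triangular $\mathcal{O}_E$-substitution coming from the binomial theorem, so the statement is lift-independent. A direct check identifies the right-hand side of (ii) with $\{P:\ |d_i|\le|\pi^{-l}|^{\,n-i}\ \text{for all }i\}$, so the task is to show that $P\in N_l(\beta)$ is equivalent to these coefficient bounds. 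The implication ``$\Leftarrow$'' is immediate from the ultrametric inequality together with $|u-\pi^{-l}\beta|\le|\pi^{-l}|$ on $D_l(\beta)$, and uses nothing about $n$. For ``$\Rightarrow$'' the hypothesis $n<q$ enters: I would pick $t_0,\dots,t_n\in\mathcal{O}_F$ with pairwise distinct residues in $\Bbb{F}_q$ --- possible exactly because $n+1\le q$ --- and evaluate $P$ at the $n+1$ points $v_j=\pi^{-l}\beta+\pi^{-l}t_j\in D_l(\beta)$. Then the quantities $d_i\pi^{-li}$ solve the Vandermonde system $\sum_i(d_i\pi^{-li})\,t_j^i=P(v_j)$ whose matrix $(t_j^i)$ has determinant $\prod_{j<k}(t_k-t_j)\in\mathcal{O}_F^\times$, so Cramer's rule gives $|d_i\pi^{-li}|\le\max_j|P(v_j)|\le|\pi|^{-nl}$, hence $|d_i|\le|\pi^{-l}|^{\,n-i}$. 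An alternative closer to the paper's style is to first apply $\tau$ of an upper-triangular matrix to reduce to $l=0$, $\beta=0$, where (ii) becomes the statement $\{P\in E[u]^{\le n}:\ |P|\le 1\text{ on }\mathcal{O}_F\}=\mathcal{O}_E[u]^{\le n}$, proved by the same Vandermonde argument; the twist $\det(.)^m$ contributes only a harmless power of $\pi$ and plays no role.

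For (iii), given (ii), I would observe that $D_l(\beta)$ and $D_{l+1}(\pi\beta)$ are concentric --- both centred at $\pi^{-l}\beta$, on choosing $\pi b$ as the lift of $\pi\beta$ when $b$ lifts $\beta$ --- of radii $|\pi^{-l}|<|\pi^{-l-1}|$. Then the basis $g_i=(\pi^{-l})^{n-i}(u-\pi^{-l}\beta)^i$ of $N_l(\beta)$ and the basis $h_i=(\pi^{-l-1})^{n-i}(u-\pi^{-l}\beta)^i$ of $N_{l+1}(\pi\beta)$ provided by (ii) satisfy $g_i=\pi^{\,n-i}h_i$ with $n-i\ge 0$, so each $g_i$ lies in $N_{l+1}(\pi\beta)$, and therefore $N_l(\beta)\subseteq N_{l+1}(\pi\beta)$. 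Equivalently: for $P\in N_l(\beta)$, expanding at the common centre and using $|d_i|\le|\pi^{-l}|^{\,n-i}$ from (ii) gives $|P(v)|\le\max_i|d_i|\,|\pi^{-l-1}|^i\le|\pi|^{-n(l+1)}$ for $v\in D_{l+1}(\pi\beta)$.

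The main obstacle is the ``$\Rightarrow$'' direction of (ii). The real point is that a polynomial of degree $\le n$ which is small on a disk of $F$ need not have small Taylor coefficients once $n\ge q$: the residue field $\Bbb{F}_q$ is then too small for a disk in $F$ to realise the Gauss norm --- for instance a multiple of a polynomial vanishing on every residue class can be uniformly small on the whole disk while having a large leading coefficient --- so $N_l(\beta)$ becomes strictly larger than the claimed span, and $n<q$ is precisely the threshold that makes $n+1$ residue classes available for the Vandermonde step. Everything else (the lift-independence in (ii) and the tracking of powers of $\pi$ in (i) and (iii)) is routine once this point is in hand.
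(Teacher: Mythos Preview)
Your proof is correct and follows essentially the same approach as the paper: for (i) you use the covering $D_{l+1}(\gamma)=\coprod_{\pi\beta=\gamma}D_l(\beta)$ exactly as the paper does; for (ii) the paper reduces to $l=0$, $\beta=0$ by the change of variables $P\mapsto\pi^{nl}P(\pi^{-l}u+\pi^{-l}\beta)$ and then declares the statement $\{P:|P|\le1\text{ on }\mathcal{O}_F\}=\mathcal{O}_E[u]^{\le n}$ ``well-known'' (with the counterexample $\pi^{-1}(u^q-u)$ for $n\ge q$), while you supply the Vandermonde argument that justifies this; and for (iii) the paper simply says ``immediate consequence of (ii)'', which is exactly what your basis comparison $g_i=\pi^{\,n-i}h_i$ spells out. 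The only extra content you add is the explicit interpolation argument for (ii), which is the standard proof of the fact the paper invokes.
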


\begin{proof} (i) If $P\in N_{l}(\beta)$ then it is bounded by $|\pi|^{-nl}$
on $D_{l}(\beta).$ But the $q$ disks $D_{l}(\beta),$ for the $\beta$
satisfying $\pi\beta=\gamma,$ cover $D_{l+1}(\gamma).$ The result
follows.

(ii) Clearly $P\in N_{l}(\beta)$ if and only if $\pi^{nl}P(\pi^{-l}u+\pi^{-l}\beta)\in N_{0}(0).$
It is therefore enough to prove that $|P(u)|\le1$ for all $u\in\mathcal{O}_{F}$
if and only if $P\in\mathcal{O}_{E}[u]^{\le n}.$ This is well-known,
but note that it fails if $n\ge q$ (consider $\pi^{-1}(u^{q}-u)$).

(iii) This is an immediate consequence of (ii). \end{proof}

\subsection{Passing from $\mathcal{O}_{E}[B]$-modules to $\mathcal{O}_{E}[G]$-modules}

Consider the representation $V_{\rho},$ where $\rho=\tau\otimes\sigma,$
$\tau=\det(.)^{m}\otimes Sym^{n},$ and $\sigma=Ind_{B}^{G}(\chi_{1},\chi_{2})$
are as in the introduction.

\begin{proposition} Let $v_{1},\dots,v_{r}\in V_{\sigma}$ be such
that the module $\Lambda_{\sigma}=\sum_{j=1}^{r}\mathcal{O}_{E}[B]v_{j}$
spans $V_{\sigma}$ over $E.$ Let 
\begin{equation}
\Lambda=\sum_{i=0}^{n}\sum_{j=1}^{r}\mathcal{O}_{E}[B]\left(u^{i}\otimes v_{j}\right)\subset V_{\rho}.
\end{equation}
Then $\Lambda$ is commensurable with every cyclic $\mathcal{O}_{E}[G]$-submodule
of $V_{\rho}.$ \end{proposition}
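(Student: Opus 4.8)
The plan is to reduce the whole statement to the single claim that $\mathcal{O}_E[G]\Lambda$, the $\mathcal{O}_E[G]$-submodule of $V_\rho$ generated by $\Lambda$, is commensurable with $\Lambda$. Granting this, let $\mathcal{O}_E[G]w$ be any cyclic $\mathcal{O}_E[G]$-submodule with $w\ne 0$. Since $\rho=\tau\otimes\sigma$ is irreducible ($\tau$ is irreducible, and $\sigma$ is irreducible by the standing hypothesis), $E[G]w=V_\rho$, so each of the finitely many generators $u^i\otimes v_j$ of $\Lambda$ lies in $c_1^{-1}\,\mathcal{O}_E[G]w$ for a suitable non-zero $c_1$, and the $B$-stability of $\mathcal{O}_E[G]w$ then gives $\Lambda\subseteq c_1^{-1}\,\mathcal{O}_E[G]w$. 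Conversely, once it is known that $\Lambda$ spans $V_\rho$ over $E$ we have $w\in c_2^{-1}\Lambda$, hence $\mathcal{O}_E[G]w\subseteq c_2^{-1}\,\mathcal{O}_E[G]\Lambda$, and by the claim the right-hand side is contained in a scalar multiple of $\Lambda$. Thus $\Lambda$ and $\mathcal{O}_E[G]w$ are each contained in a scalar multiple of the other.

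I would first record the auxiliary fact that $\Lambda$ spans $V_\rho$ over $E$. Let $n_b=\left(\begin{array}{ll} 1 & b\\ 0 & 1\end{array}\right)$. For $b$ in a small enough neighbourhood of $0$ in $F$ one has $\sigma(n_b)v_j=v_j$, so $\rho(n_b)(u^n\otimes v_j)=(u+b)^n\otimes v_j$. The polynomials $(u+b)^n$ span $V_\tau=E[u]^{\le n}$ — a Vandermonde-type computation, valid because $E$ has characteristic $0$ so the relevant binomial coefficients are invertible — so $V_\tau\otimes v_j\subseteq E\Lambda$ for each $j$. Applying now an arbitrary $b'\in B$ and using that $\tau(b')$ acts invertibly on $V_\tau$ gives $V_\tau\otimes\sigma(b')v_j\subseteq E\Lambda$; summing over $b'$ and $j$ and invoking the hypothesis $\sum_j E[B]v_j=V_\sigma$ yields $V_\tau\otimes V_\sigma\subseteq E\Lambda$, i.e.\ $E\Lambda=V_\rho$.

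For the claim I would use the Iwasawa decomposition $G=BK$ with $K=GL_2(\mathcal{O}_F)$ together with a boundedness observation. The lattice $\mathcal{L}_\tau=\mathcal{O}_E[u]^{\le n}\subset V_\tau$ is stable under $\tau(K)$: for $k\in GL_2(\mathcal{O}_F)$ the matrix entries lie in $\mathcal{O}_F\subseteq\mathcal{O}_E$ and $\det k\in\mathcal{O}_E^\times$, so $\tau(k)u^i=(\det k)^m(a+cu)^{n-i}(b+du)^i$ has coefficients in $\mathcal{O}_E$. On the other hand, each $v_j$ being a smooth vector has finite $K$-orbit, so $M_j:=\sum_{k\in K}\mathcal{O}_E\,\sigma(k)v_j$ is finitely generated over $\mathcal{O}_E$. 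Hence for every $k\in K$ and every generator $u^i\otimes v_j$ of $\Lambda$, $\rho(k)(u^i\otimes v_j)=\tau(k)u^i\otimes\sigma(k)v_j$ lies in $N:=\mathcal{L}_\tau\otimes_{\mathcal{O}_E}\sum_j M_j$, a single finitely generated $\mathcal{O}_E$-module. Since $\Lambda$ spans $V_\rho$ over $E$, there is a non-zero $c$ with $cN\subseteq\Lambda$, whence $\rho(k)(u^i\otimes v_j)\in c^{-1}\Lambda$ for all $k\in K$. Writing a general $g\in G$ as $g=bk$, we get $\rho(g)(u^i\otimes v_j)=\rho(b)\big(\rho(k)(u^i\otimes v_j)\big)\in\rho(b)(c^{-1}\Lambda)=c^{-1}\Lambda$, using that $\Lambda$ is $\mathcal{O}_E[B]$-stable; summing over the finitely many generators, $\mathcal{O}_E[G]\Lambda\subseteq c^{-1}\Lambda$, and since $\Lambda\subseteq\mathcal{O}_E[G]\Lambda$ trivially, the claim follows.

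The step that requires care — the reason the proposition is not purely formal — is coping with the non-compactness of $B$: one cannot argue that a congruence subgroup of $K$ almost fixes the generators of $\Lambda$ and then propagate this estimate along $B$, because conjugating a small open subgroup by an unbounded diagonal matrix need not keep it inside $K$. The device above avoids this by exploiting that $\Lambda$ is already $\mathcal{O}_E[B]$-stable, so that it suffices to bound the $K$-translates of the finitely many generators — and these remain inside a fixed finitely generated $\mathcal{O}_E$-module thanks to the two separate facts that $\tau$ preserves an $\mathcal{O}_E$-lattice over $GL_2(\mathcal{O}_F)$ while $\sigma$ is smooth. (The reduction in the first paragraph uses $\rho$ irreducible, which holds under the running hypotheses.)
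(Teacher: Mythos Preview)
Your proof is correct and follows essentially the same approach as the paper: the Iwasawa decomposition $G=BK$, the observation that the $K$-translates of the generators $u^i\otimes v_j$ lie in a single finitely generated $\mathcal{O}_E$-module (using $\tau(K)\mathcal{O}_E[u]^{\le n}=\mathcal{O}_E[u]^{\le n}$ and smoothness of the $v_j$), and then $B$-stability of $\Lambda$ to conclude $\mathcal{O}_E[G]\Lambda\subset c^{-1}\Lambda$. Your Vandermonde argument for $E\Lambda=V_\rho$ is a harmless detour---since all $u^i$ ($0\le i\le n$) are already among the generators, $V_\tau\otimes v_j\subset E\Lambda$ is immediate---but otherwise the two arguments are the same.
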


\begin{proof} Let $K=GL_{2}(\mathcal{O}_{F})$ and recall that $G=BK.$
If $N\le K$ is a subgroup of finite index fixing all the $v_{j},$
then $N$ preserves the finitely generated $\mathcal{O}_{E}$-submodule
\begin{equation}
\sum_{i,j}\mathcal{O}_{E}(u^{i}\otimes v_{j}),
\end{equation}
because $\tau(K)$ preserves $\mathcal{O}_{E}[u]^{\le n}.$ It follows
that $\sum_{i,j}\mathcal{O}_{E}[K](u^{i}\otimes v_{j})$ is finitely
generated over $\mathcal{O}_{E}.$ Since $\Lambda$ spans $V_{\rho}$
over $E,$ there is a constant $c\in E$ such that 
\begin{equation}
\sum_{i,j}\mathcal{O}_{E}[K](u^{i}\otimes v_{j})\subset c\Lambda.
\end{equation}
But then 
\begin{eqnarray}
\sum_{i,j}\mathcal{O}_{E}[G](u^{i}\otimes v_{j}) & = & \mathcal{O}_{E}[B]\sum_{i,j}\mathcal{O}_{E}[K](u^{i}\otimes v_{j})\notag\\
 & \subset & \mathcal{O}_{E}[B](c\Lambda)=c\Lambda.
\end{eqnarray}
On the other hand, $\Lambda\subset\sum_{i,j}\mathcal{O}_{E}[G](u^{i}\otimes v_{j}).$
The two inclusions prove the proposition, since the sum of a finite
number of cyclic modules, all being commensurable, is again commensurable
with any cyclic module. \end{proof}

\begin{corollary} To prove Theorem 1.2 we may replace $V_{\rho}^{0}$
by $\Lambda.$ \end{corollary}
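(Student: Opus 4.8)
The plan is to obtain this as a formal consequence of Proposition 2.1 together with a rescaling. First I would fix a non-zero $v\in V_{\rho}$ and set $V_{\rho}^{0}=\mathcal{O}_{E}[G]v$, a cyclic $\mathcal{O}_{E}[G]$-submodule of $V_{\rho}=\mathcal{K}$; by Proposition 2.1 it is commensurable with $\Lambda$, and I would extract from this the single containment I need, namely $V_{\rho}^{0}\subseteq c\Lambda$ for some $c\in E^{\times}$.

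Next I would carry out the transfer. Suppose Theorem 1.2 has been established with $V_{\rho}^{0}$ replaced by $\Lambda$, giving $\mathcal{O}_{E}$-lattices $\widetilde{M}_{0}(\beta)\subset V_{\tau}$ such that any $\phi\in\Lambda$ vanishing outside $\mathcal{O}_{F}$ has an expansion $\phi=\sum_{l\ge l_{0}}\sum_{\beta\in W}C_{l}(\beta)\psi_{\beta}(\pi^{-l}x)\phi_{l}(x)$ with $C_{0}(\beta)\in\widetilde{M}_{0}(\beta)$ for every $\beta$. Given $\phi\in V_{\rho}^{0}$ vanishing outside $\mathcal{O}_{F}$, the function $c^{-1}\phi$ lies in $\Lambda$ and still vanishes outside $\mathcal{O}_{F}$, so it admits such an expansion; multiplying through by $c$ gives an expansion of $\phi$ whose zeroth amplitudes $cC_{0}(\beta)$ lie in $c\widetilde{M}_{0}(\beta)$. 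Taking $M_{0}(\beta)=c\widetilde{M}_{0}(\beta)$, which is again a lattice in $V_{\tau}$, yields Theorem 1.2 for $V_{\rho}^{0}$, and the corollary follows.

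I do not expect any genuine obstacle here; the mathematical content is entirely in the $\Lambda$-version, and this step is bookkeeping. The one point deserving a word is the non-uniqueness of the amplitudes: a given function has many expansions, differing by perturbations $\widetilde{C}_{l}(\beta)$ that are constant on cosets of $W_{1}$. Multiplication by the scalar $c$ respects this ambiguity — it carries one admissible expansion of $c^{-1}\phi$ to an admissible expansion of $\phi$ and carries $W_{1}$-constant perturbations to $W_{1}$-constant perturbations — so no information is lost in the passage, and it genuinely suffices to analyse functions in $\Lambda$ in the sequel.
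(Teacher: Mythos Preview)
Your argument is correct and is precisely the reasoning the paper leaves implicit: the corollary is stated without proof, as an immediate consequence of the commensurability in Proposition~2.1, and the scaling you describe is the intended passage from $\Lambda$ to $V_{\rho}^{0}$. Your remark on the non-uniqueness of amplitudes is a harmless clarification and does not depart from the paper's line.
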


\subsection{The Kirillov model and a choice of $\Lambda$}

Assume from now on that $\chi_{1}\neq\omega\chi_{2}.$ The exceptional
case $\chi_{1}=\omega\chi_{2}$ requires special attention and will
be dealt with in the end. Let $\mathcal{K}$ be the model of $V_{\rho}$
described in the introduction. For $\left\{ v_{j}\right\} $ we choose
the two functions 
\begin{equation}
v_{1}=F_{0}^{\prime}(x)=1_{\mathcal{O}_{F}}\chi_{1},\,\,\, v_{2}=F_{0}^{\prime\prime}=1_{\mathcal{O}_{F}}\omega\chi_{2}.
\end{equation}
Let $F_{k}^{\prime}(x)=F_{0}^{\prime}(\pi^{-k}x)$ and similarly $F_{k}^{\prime\prime}(x)=F_{0}^{\prime\prime}(\pi^{-k}x).$
Since 
\begin{equation}
\sigma\left(\left(\begin{array}{ll}
\pi^{-k} & -\pi^{-k}\beta\\
 & 1
\end{array}\right)\right)F_{0}^{\prime}(x)=\psi_{\beta}(-\pi^{-k}x)F_{k}^{\prime}(x)
\end{equation}
and similarly for $F_{0}^{\prime\prime}(x),$ we see that $\Lambda_{\sigma}=\mathcal{O}_{E}[B]F_{0}^{\prime}+\mathcal{O}_{E}[B]F_{0}^{\prime\prime}$
spans $V_{\sigma}$ over $E$.

\begin{lemma} Let $\Lambda=\sum_{i=0}^{n}\sum_{j=1}^{2}\mathcal{O}_{E}[B]\left(u^{i}\otimes v_{j}\right),$
where $v_{1}=F_{0}^{\prime}$ and $v_{2}=F_{0}^{\prime\prime}.$ Then
every element of $\Lambda$ can be written as a finite sum 
\begin{equation}
\phi=\sum_{k=k_{0}}^{\infty}\sum_{\beta\in W}c_{k}^{\prime}(\beta)\psi_{\beta}(-\pi^{-k}x)F_{k}^{\prime}(x)+c_{k}^{\prime\prime}(\beta)\psi_{\beta}(-\pi^{-k}x)F_{k}^{\prime\prime}(x),
\end{equation}
where $c_{k}^{\prime}(\beta),c_{k}^{\prime\prime}(\beta)\in\pi^{-km}N_{k}(\beta).$
\end{lemma}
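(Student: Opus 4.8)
The plan is to show that the spanning set $\{u^i\otimes v_j\}$, together with the action of $B$, produces only functions of the stated form, and then to close under $\mathcal{O}_E$-linear combinations and under further applications of $B$. First I would analyze how a single generator $u^i\otimes v_j$ transforms. Using the formula $\rho(g)\phi(x)=\tau(g)(\psi(bx)\phi(ax))$ for $g=\begin{pmatrix} a & b\\ 0 & 1\end{pmatrix}$, I would reduce to the two types of building blocks $g=\begin{pmatrix}\pi^{-k} & -\pi^{-k}\beta\\ 0 & 1\end{pmatrix}$ (already computed in display (2.14) for the smooth part) together with elements of the standard maximal compact torus/unipotent. The key point is that $\sigma(g)F_0'=\psi_\beta(-\pi^{-k}x)F_k'$, and the same for $F_0''$, so applying $B$ to $v_1,v_2$ moves us through exactly the functions $\psi_\beta(-\pi^{-k}x)F_k'$ and $\psi_\beta(-\pi^{-k}x)F_k''$ — this is where the shape of the expansion comes from. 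The algebraic factor $\tau(g)$ acting on $u^i$ expands $u^i$ into a polynomial in $u$ whose coefficients I need to track; the twist $\det(.)^m$ contributes the scalar $\pi^{-km}$ when $a=\pi^{-k}$, which accounts for the $\pi^{-km}$ prefactor in the claimed membership $c_k'(\beta)\in\pi^{-km}N_k(\beta)$.

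Next I would make the induction explicit. Since $\Lambda$ is spanned over $\mathcal{O}_E$ by $w\cdot(u^i\otimes v_j)$ for $w$ in (a generating set of) $B$ and $i\le n$, it suffices to check: (a) each such generator has an expansion of the required form with coefficients in $\pi^{-km}N_k(\beta)$; and (b) the set of functions admitting such an expansion is stable under $\mathcal{O}_E$-linear combination and under the action of each generator of $B$. Step (a) is a direct computation: for a generator built from $\begin{pmatrix}\pi^{-k} & -\pi^{-k}\beta\\ 0 & 1\end{pmatrix}$, the polynomial $\tau(g)u^i$ is, up to the scalar $\pi^{-km}$, a product of linear factors of the form $(a+cu)^{n-i}(b+du)^i$ with $c=0$, hence $(\pi^{-k})^{n-i}(-\pi^{-k}\beta+u)^i$ after scaling — which is precisely one of the generators of $N_k(\beta)$ in Lemma 2.2(ii), valid because $n<q$. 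For step (b), $\mathcal{O}_E$-linearity is immediate since each $N_k(\beta)$ is an $\mathcal{O}_E$-module; the harder part is stability under $B$, which I would handle by decomposing a general $b\in B$ as a product of a diagonal matrix, a unipotent upper-triangular matrix, and a power of $\begin{pmatrix}\pi & 0\\ 0 & 1\end{pmatrix}$, and checking each type. Translating by the unipotent $\begin{pmatrix}1 & t\\ 0 & 1\end{pmatrix}$ multiplies by $\psi(tx)$, which merely shifts the frequency $\beta\mapsto\beta+t$ on each annulus and reindexes the coefficients; the scaling by $\begin{pmatrix}\pi & 0\\ 0 & 1\end{pmatrix}$ shifts $k\mapsto k-1$ and, crucially, requires the inclusion $N_{k-1}(\beta)\subset N_k(\pi\beta)$ of Lemma 2.2(iii) to keep the coefficients in the right lattices; the diagonal torus acts by unitary characters (by condition (i)) together with the benign $\pi^{-km}$ bookkeeping.

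The main obstacle I anticipate is the bookkeeping in step (b) for the scaling action, i.e. verifying that the lattice memberships are preserved when $k$ decreases, since that is exactly where Lemma 2.2(iii) and hence the hypothesis $n<q$ enter, and where one must be careful that the non-uniqueness of the coefficients $c_k'(\beta)$ (they are only well defined modulo the ambiguity of Lemma 2.1(i), i.e. modulo perturbations depending only on $\pi\beta$) does not interfere — one must choose representatives consistently so that the claimed membership holds for some valid choice. A secondary subtlety is confirming that the two families $\psi_\beta(-\pi^{-k}x)F_k'$ and $\psi_\beta(-\pi^{-k}x)F_k''$ remain separated under the group action — that applying $B$ to an $F'$-type term never produces an $F''$-type term — which follows because $\chi_1\ne\omega\chi_2$ guarantees the two lines in the Jacquet module are not mixed by $B$, but this should be spelled out. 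Once these points are settled, reading off the coefficients annulus-by-annulus and invoking Lemma 2.1 to control the ambiguity finishes the proof.
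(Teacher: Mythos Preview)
Your plan would work, but it is considerably more involved than necessary, and a couple of the obstacles you anticipate are not real. The paper's proof is essentially a one-line computation once two reductions are made. First, since the central character of $\rho$ is unitary, $\mathcal{O}_E[B]v=\mathcal{O}_E[P]v$ for each generator $v$, where $P=\left\{\left(\begin{smallmatrix}a&b\\0&1\end{smallmatrix}\right)\right\}$ is the mirabolic. Second, $P\cap K$ (indeed $B\cap K$) already stabilizes the finite $\mathcal{O}_E$-module $\sum_{i,j}\mathcal{O}_E(u^i\otimes v_j)$. Hence $\Lambda$ is spanned over $\mathcal{O}_E$ by $\rho(g_{k,\beta})(u^i\otimes v_j)$, where $g_{k,\beta}=\left(\begin{smallmatrix}\pi^{-k}&-\pi^{-k}\beta\\0&1\end{smallmatrix}\right)$ runs over a set of coset representatives for $P/(P\cap K)$. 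Computing this single family yields exactly
\[
\pi^{-km}(\pi^{-k})^{n-i}(u-\pi^{-k}\beta)^i\otimes\psi_\beta(-\pi^{-k}x)F_k^{(j)}(x),
\]
and Lemma~2.2(ii) identifies $(\pi^{-k})^{n-i}(u-\pi^{-k}\beta)^i$ as a generator of $N_k(\beta)$. That is the whole argument.

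Your step~(b), verifying $B$-stability of the target set, is therefore bypassed entirely. In particular, Lemma~2.2(iii) is \emph{not} used here---it enters only later, in the proof of Lemma~3.2. If you trace through the action of $\left(\begin{smallmatrix}\pi&0\\0&1\end{smallmatrix}\right)$ in your scheme, you will find that the $Sym^n$-part maps $N_k(\beta)$ isomorphically onto $N_{k-1}(\beta)$ (same $\beta$), so no inclusion between lattices at different levels is required. Likewise the concern about $F'$-terms mixing with $F''$-terms is unfounded: the explicit formula (1.10) for $\rho(g)$ with $g$ upper-triangular manifestly keeps the two families apart, independently of $\chi_1\ne\omega\chi_2$. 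And the non-uniqueness of the $c_k'(\beta)$ is irrelevant, since the lemma only asserts that \emph{some} expansion with the stated bounds exists, and the coset computation hands you one directly.
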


\begin{proof} Since the central character of $\rho$ is unitary\ (condition
(1.7)(i)), it is enough to span $\Lambda$ by matrices in the mirabolic
subgroup 
\begin{equation}
\left\{ \left(\begin{array}{ll}
a & b\\
0 & 1
\end{array}\right)\right\} \le B.
\end{equation}
Furthermore, as $B\cap K$ stabilizes $\sum_{i=0}^{n}\sum_{j=1}^{2}\mathcal{O}_{E}\left(u^{i}\otimes v_{j}\right),$
we see that 
\begin{eqnarray*}
\Lambda & = & \sum_{k\in\Bbb{Z}}\sum_{\beta\in W}\mathcal{O}_{E}\rho\left(\left(\begin{array}{ll}
\pi^{-k} & -\pi^{-k}\beta\\
 & 1
\end{array}\right)\right)\left(u^{i}\otimes v_{j}\right)\\
 & = & \sum_{k\in\Bbb{Z}}\sum_{\beta\in W}\pi^{-km}(\pi^{-k})^{n-i}(u-\pi^{-k}\beta)^{i}\otimes\psi_{\beta}(-\pi^{-k}x)\left(\mathcal{O}_{E}F_{k}^{\prime}(x)+\mathcal{O}_{E}F_{k}^{\prime\prime}(x)\right).
\end{eqnarray*}
The coefficients $(\pi^{-k})^{n-i}(u-\pi^{-k}\beta)^{i}\in N_{k}(\beta),$
see Lemma 2.2(ii). \end{proof}

\section{The unramified case}

\subsection{The recursion relations}

Assume now that $\chi_{1}$ and $\chi_{2}$ are unramified. Then 
\begin{equation}
F_{k}^{\prime}(x)=\sum_{l=k}^{\infty}\lambda^{l-k}\phi_{l},\,\,\, F_{k}^{\prime\prime}(x)=\sum_{l=k}^{\infty}\mu^{l-k}\phi_{l}.
\end{equation}

Pick a $\phi\in\Lambda.$ Substituting (3.1) in the expression (2.16),
and rearranging the sum ``by annuli'' we get 
\begin{equation}
\phi=\sum_{l=k_{0}}^{\infty}\sum_{\beta\in W}C_{l}(\beta)\psi_{\beta}(-\pi^{-l}x)\phi_{l}(x),
\end{equation}
where 
\begin{eqnarray}
C_{l}(\beta) & = & C_{l}^{\prime}(\beta)+C_{l}^{\prime\prime}(\beta),\\
C_{l}^{\prime}(\beta) & = & \sum_{k=k_{0}}^{l}\lambda^{l-k}\sum_{\pi^{l-k}\alpha=\beta}c_{k}^{\prime}(\alpha),\notag\\
C_{l}^{\prime\prime}(\beta) & = & \sum_{k=k_{0}}^{l}\mu^{l-k}\sum_{\pi^{l-k}\alpha=\beta}c_{k}^{\prime\prime}(\alpha).\notag
\end{eqnarray}
We deduce that 
\begin{eqnarray}
C_{k_{0}}^{\prime}(\beta) & = & c_{k_{0}}^{\prime}(\beta)\\
C_{l}^{\prime}(\beta) & = & \lambda\sum_{\pi\alpha=\beta}C_{l-1}^{\prime}(\alpha)+c_{l}^{\prime}(\beta),\notag
\end{eqnarray}
and similarly for $C_{l}^{\prime\prime}(\beta),$ with $\mu$ instead
of $\lambda.$ We now derive from these relations a recursion relation
for the $C_{l}(\beta),$ going two generations backwards.

\begin{lemma} Let $c_{l}=c_{l}^{\prime}+c_{l}^{\prime\prime}.$ Then
$C_{k_{0}}(\beta)=c_{k_{0}}(\beta)$ and 
\begin{eqnarray}
C_{l+1}(\gamma) & = & (\lambda+\mu)\sum_{\pi\beta=\gamma}C_{l}(\beta)-\mu\lambda\sum_{\pi\beta=\gamma}\sum_{\pi\alpha=\beta}C_{l-1}(\alpha)\notag\\
 &  & -\sum_{\pi\beta=\gamma}(\lambda c_{l}^{\prime\prime}(\beta)+\mu c_{l}^{\prime}(\beta))+c_{l+1}(\gamma).
\end{eqnarray}
\end{lemma}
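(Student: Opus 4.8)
The plan is to obtain the stated recursion purely formally from the first-order recursions (3.4) for $C_l'$ (and its analogue for $C_l''$), by the standard device that eliminates the two auxiliary sequences: two ``geometric'' sequences with ratios $\lambda$ and $\mu$ combine into a single sequence annihilated by the second-order operator with characteristic polynomial $(T-\lambda)(T-\mu)$, and the inhomogeneous terms are carried along in the same way.

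First I would introduce the averaging operator $S$ on families indexed by $W$, by $(SF)(\gamma)=\sum_{\pi\beta=\gamma}F(\beta)$, so that $(S^2F)(\gamma)=\sum_{\pi\beta=\gamma}\sum_{\pi\alpha=\beta}F(\alpha)$; the two iterated sums appearing in the target identity are then $SC_l$ and $S^2C_{l-1}$. Adopting the convention $C_{k_0-1}'=C_{k_0-1}''=0$, the base line of (3.4) becomes exactly the $l=k_0$ instance of its recursive line, so that for every $l\ge k_0$,
\[
C_l'=\lambda SC_{l-1}'+c_l',\qquad C_l''=\mu SC_{l-1}''+c_l''.
\]
Adding the two base cases gives at once $C_{k_0}(\beta)=c_{k_0}'(\beta)+c_{k_0}''(\beta)=c_{k_0}(\beta)$, the first assertion of the lemma.

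For the recursion proper I would evaluate the combination $C_{l+1}-(\lambda+\mu)SC_l+\lambda\mu S^2C_{l-1}$ by splitting each term into its primed and doubly-primed parts. On the primed side, inserting $C_{l+1}'=\lambda SC_l'+c_{l+1}'$ and then factoring out $\mu S$,
\[
\lambda SC_l'-(\lambda+\mu)SC_l'+\lambda\mu S^2C_{l-1}'=-\mu S\bigl(C_l'-\lambda SC_{l-1}'\bigr)=-\mu Sc_l',
\]
and the symmetric computation yields $-\lambda Sc_l''$ on the doubly-primed side, while the two leftover terms $c_{l+1}'+c_{l+1}''$ sum to $c_{l+1}$. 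Rearranging,
\[
C_{l+1}=(\lambda+\mu)SC_l-\lambda\mu S^2C_{l-1}-S(\mu c_l'+\lambda c_l'')+c_{l+1},
\]
which, written out at a given $\gamma$ with the sums over $\pi\beta=\gamma$ and $\pi\alpha=\beta$ and with $\mu c_l'+\lambda c_l''$ reordered as $\lambda c_l''+\mu c_l'$, is precisely the identity displayed in the lemma.

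Since every object here is a finite sum over $W$ and all operations are linear, there is no substantive obstacle; the only point needing a little care is the bookkeeping at the left endpoint, namely checking that reading (3.4) with the convention $C_{k_0-1}'=C_{k_0-1}''=0$ is consistent with its own base line, which is what makes the derived identity valid for all $l\ge k_0$ rather than only for $l\ge k_0+1$.
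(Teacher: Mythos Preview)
Your proof is correct and takes essentially the same approach as the paper, which also derives the second-order recursion purely algebraically from the two first-order recursions (3.4). The paper's bookkeeping is slightly different---it isolates an asymmetric term $(\mu-\lambda)\sum_{\pi\beta=\gamma}C_l''(\beta)$ and then eliminates it via the first-order recursion for $C''$, rather than directly evaluating your symmetric combination $C_{l+1}-(\lambda+\mu)SC_l+\lambda\mu S^2C_{l-1}$---but the content is identical, and your explicit convention $C_{k_0-1}'=C_{k_0-1}''=0$ handles the left endpoint more carefully than the paper does.
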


\begin{proof} We add the relations that we have obtained for $C_{l}^{\prime}(\beta)$
and $C_{l}^{\prime\prime}(\beta)$ and rearrange them. We do the same
at level $l+1$. Letting $\alpha,$ $\beta$ and $\gamma$ range over
$W$ as usual, we get 
\begin{eqnarray}
C_{l}(\beta) & = & \lambda\sum_{\pi\alpha=\beta}C_{l-1}(\alpha)+(\mu-\lambda)\sum_{\pi\alpha=\beta}C_{l-1}^{\prime\prime}(\alpha)+c_{l}(\beta),\notag\\
C_{l+1}(\gamma) & = & \lambda\sum_{\pi\beta=\gamma}C_{l}(\beta)+(\mu-\lambda)\sum_{\pi\beta=\gamma}C_{l}^{\prime\prime}(\beta)+c_{l+1}(\gamma).
\end{eqnarray}
To deal with the middle term in the \emph{second} equation we use
the recursive relation for $C_{l}^{\prime\prime}(\beta)$ and then
eliminate $(\mu-\lambda)\sum_{\pi\alpha=\beta}C_{l-1}^{\prime\prime}(\alpha)$
using the \emph{first }equation: 
\begin{eqnarray}
(\mu-\lambda)\sum_{\pi\beta=\gamma}C_{l}^{\prime\prime}(\beta) & = & (\mu-\lambda)\sum_{\pi\beta=\gamma}\left(\mu\sum_{\pi\alpha=\beta}C_{l-1}^{\prime\prime}(\alpha)+c_{l}^{\prime\prime}(\beta)\right)\notag\\
 & = & \mu\sum_{\pi\beta=\gamma}\left(C_{l}(\beta)-\lambda\sum_{\pi\alpha=\beta}C_{l-1}(\alpha)-c_{l}(\beta)\right)\notag\\
 &  & +(\mu-\lambda)\sum_{\pi\beta=\gamma}c_{l}^{\prime\prime}(\beta)\notag\\
 & = & \mu\sum_{\pi\beta=\gamma}C_{l}(\beta)-\mu\lambda\sum_{\pi\beta=\gamma}\sum_{\pi\alpha=\beta}C_{l-1}(\alpha)\notag\\
 &  & -\sum_{\pi\beta=\gamma}(\lambda c_{l}^{\prime\prime}(\beta)+\mu c_{l}^{\prime}(\beta)).
\end{eqnarray}
The lemma follows from this. \end{proof}

\subsection{Conclusion of the proof}

Let $\rho$ satisfy the conditions of Thoerem 1.2, i.e. the estimates
(1.7)(i) and (ii) on $\lambda$ and $\mu,$ and $n<q.$ Pick a $\phi\in\Lambda$
as before, and expand it as in (3.2). Assume that it vanishes outside
of $\mathcal{O}_{F}.$ Let 
\begin{equation}
M_{l}(\beta)=q^{-1}\pi^{-n-lm}N_{l}(\beta).
\end{equation}

\begin{lemma} For every $k_{0}\le l\le0$ and every $\beta\in W,$
$C_{l}(\beta)\in M_{l}(\beta).$ \end{lemma}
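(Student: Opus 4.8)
The plan is to prove the statement by induction on $l$, running upward, using the two-generation recursion of Lemma~3.1 to pass from levels $l-2,l-1$ to level $l$. It is convenient first to adopt the conventions $C_l(\beta)=c_l(\beta)=0$ for $l<k_0$, under which the relations (3.3)--(3.6) hold for all $l$; the base of the induction is then the tautology $0\in M_l(\beta)$ for $l<k_0$, and it remains to establish $C_l(\beta)\in M_l(\beta)$ for all $l\le 0$.

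The key input is that $\phi$ vanishes off $\mathcal{O}_F$. For $l<0$ the annulus $\pi^lU_F$ lies outside $\mathcal{O}_F$, and only the $l$-th term of (3.2) is supported there, so $\sum_{\beta\in W}C_l(\beta)\psi_\beta(-\pi^{-l}x)\phi_l(x)=0$; by the Fourier dichotomy of (1.14)--(1.15) this forces $C_l(\beta)$ to depend only on $\pi\beta$ whenever $l<0$. From this I would extract two facts. \emph{First}, in a sum $\sum_{\pi\alpha=\beta}C_{l'}(\alpha)$ with $l'<0$ all $q$ summands coincide, so the sum equals $q\,C_{l'}(\alpha_\beta)$; the spare $q$ is what will eat the $q^{-1}$ in $M_l(\beta)=q^{-1}\pi^{-n-lm}N_l(\beta)$. \emph{Second} (``the second phenomenon''), if $l'<0$ and $C_{l'}(\alpha)\in M_{l'}(\alpha)$ for \emph{every} $\alpha$ with $\pi\alpha=\beta$, then the common value $D$ of these vectors lies in
\[
\bigcap_{\pi\alpha=\beta}M_{l'}(\alpha)=q^{-1}\pi^{-n-l'm}\!\!\bigcap_{\pi\alpha=\beta}\!N_{l'}(\alpha)=q^{-1}\pi^{-l'm}N_{l'+1}(\beta)
\]
by the intersection identity Lemma~2.2(i) --- a $\pi^n$-sharpening of $M_{l'}(\beta)$ that, via the nesting $N_{l'}(\alpha)\subset N_{l'+1}(\pi\alpha)$ of Lemma~2.2(iii), lives inside $N_{l'+1}(\beta)$; this is what will absorb the factor $\pi^{-n}$ from condition (1.7)(ii).

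For the inductive step, assume $l\le0$ and $C_{l-1}(\beta)\in M_{l-1}(\beta)$, $C_{l-2}(\beta)\in M_{l-2}(\beta)$ for all $\beta$ (so $l-1,l-2<0$ and both facts apply). Shift the index of (3.6) down by one and bound the four terms contributing to $C_l(\gamma)$. The term $c_l(\gamma)\in\pi^{-lm}N_l(\gamma)$ already lies in $M_l(\gamma)$ since $|q^{-1}\pi^{-n}|\ge1$. In $(\lambda+\mu)\sum_{\pi\beta=\gamma}C_{l-1}(\beta)$, the first fact collapses the sum to $q\,\bar C_{l-1}(\gamma)$ and the second puts $\bar C_{l-1}(\gamma)$ into $q^{-1}\pi^{-(l-1)m}N_l(\gamma)$, so the term lies in $(\lambda+\mu)\pi^{-(l-1)m}N_l(\gamma)$; this is contained in $M_l(\gamma)$ exactly when $|\lambda|,|\mu|\le|q^{-1}\pi^{-n-m}|$, which, since $\tilde{\lambda}=\lambda\pi^m$ and $\tilde{\mu}=\mu\pi^m$, is precisely (1.7)(ii). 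The term $-\sum_{\pi\beta=\gamma}(\lambda c_{l-1}''(\beta)+\mu c_{l-1}'(\beta))$ is treated the same way, using $c_{l-1}(\beta)\in\pi^{-(l-1)m}N_{l-1}(\beta)\subset\pi^{-(l-1)m}N_l(\gamma)$ from Lemma~2.2(iii). The delicate term is $-\mu\lambda\sum_{\pi\beta=\gamma}\sum_{\pi\alpha=\beta}C_{l-2}(\alpha)$: the first fact collapses the inner sum to $q\,\bar C_{l-2}(\beta)$ --- and \emph{only one} power of $q$ is gained, since $C_{l-2}$ is known to be constant on fibres of $\alpha\mapsto\pi\alpha$ but not of $\alpha\mapsto\pi^2\alpha$ --- and then the second fact at level $l-2$, together with $N_{l-1}(\beta)\subset N_l(\gamma)$, places the term in $\mu\lambda\,\pi^{-(l-2)m}N_l(\gamma)$; this sits in $M_l(\gamma)$ exactly when $|\mu\lambda|\le|q^{-1}\pi^{-n-2m}|$, which by $\tilde{\lambda}\tilde{\mu}=\mu\lambda\pi^{2m}$ is condition (1.7)(i), holding with \emph{equality}. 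Summing the four contributions yields $C_l(\gamma)\in M_l(\gamma)$, and the induction reaches $l=0$.

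I expect the main obstacle to be exactly this last, $-\mu\lambda$, double-sum term. It is the one place where condition (i) is used, and it is used with no margin, so the accounting of powers of $q$ and $\pi$ must be exact: precisely one factor $q$ must be squeezed out of the double sum, and the missing $\pi^n$ recovered from the ``second phenomenon'' sharpening, i.e. from Lemma~2.2(i) combined with the nesting $N_{l-1}(\beta)\subset N_l(\pi\beta)$. That nesting is where the weight hypothesis $n<q$ genuinely enters (Lemma~2.2(iii)); when $n\ge q$ the lattices $N_l(\beta)$ are no longer nested, and one can no longer keep both the sharpening and the inclusion $M_l(\beta)\subset M_{l+1}(\pi\beta)$ that the induction needs.
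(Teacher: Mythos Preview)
Your proposal is correct and follows essentially the same route as the paper's own proof: increasing induction on $l$ via the two-step recursion (3.5), with the four terms bounded exactly as you describe, using the constancy of $C_{l'}(\beta)$ on fibres of $\pi$ (for $l'<0$) to gain a factor $q$, the intersection identity of Lemma~2.2(i) to gain the $\pi^n$, and the nesting of Lemma~2.2(iii) to pass from $N_{l'}$ to $N_{l'+1}$. The only cosmetic differences are that the paper starts the induction explicitly at $l=k_0$ (rather than padding by zeros below $k_0$) and indexes the inductive step as a passage from $l$ to $l+1$ rather than from $l-1$ to $l$.
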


\begin{proof} We apply Lemma 2.2 and Lemma 2.5, and prove the desired
bound on $C_{l}(\beta)$ by increasing induction on $l.$

When $l=k_{0},$ $C_{k_{0}}(\beta)=c_{k_{0}}(\beta)\in\pi^{-k_{0}m}N_{k_{0}}(\beta)\subset M_{k_{0}}(\beta).$
Suppose that the lemma has been established up to index $l,$ and
$l+1\le0.$ Then $C_{l}(\beta)$ (resp. $C_{l-1}(\alpha)$) depends
only on $\pi\beta$ (resp. $\pi\alpha$), since $\phi$ vanishes on
$F-\mathcal{O}_{F}.$ We invoke the recursion relation (3.5) for $C_{l+1}(\gamma).$
The term 
\begin{equation}
\sum_{\pi\beta=\gamma}(\lambda c_{l}^{\prime\prime}(\beta)+\mu c_{l}^{\prime}(\beta))\in M_{l+1}(\gamma)
\end{equation}
since $c_{l}^{\prime}(\beta),c_{l}^{\prime\prime}(\beta)\in\pi^{-lm}N_{l}(\beta)$,
$|\mu|,|\lambda|\le|q^{-1}\pi^{-n-m}|,$ and because of the relation
$N_{l}(\beta)\subset N_{l+1}(\gamma),$ that holds whenever $\pi\beta=\gamma.$
That 
\begin{equation}
c_{l+1}(\gamma)\in M_{l+1}(\gamma)
\end{equation}
is clear. The term 
\begin{equation}
(\lambda+\mu)\sum_{\pi\beta=\gamma}C_{l}(\beta)\in M_{l+1}(\gamma)
\end{equation}
because the $q$ summands $C_{l}(\beta)$ are \emph{equal}, hence
belong to 
\begin{equation}
\bigcap_{\pi\beta=\gamma}M_{l}(\beta)=q^{-1}\pi^{-n-lm}\bigcap_{\pi\beta=\gamma}N_{l}(\beta)=q^{-1}\pi^{-lm}N_{l+1}(\gamma).
\end{equation}
Thus $\sum_{\pi\beta=\gamma}C_{l}(\beta)\in$ $\pi^{-lm}N_{l+1}(\gamma),$
while $|\lambda+\mu|\le|q^{-1}\pi^{-n-m}|.$ Finally, 
\begin{equation}
\mu\lambda\sum_{\pi\beta=\gamma}\sum_{\pi\alpha=\beta}C_{l-1}(\alpha)\in M_{l+1}(\gamma)
\end{equation}
for similar reasons: For a given $\beta,$ the $q$ summands $C_{l-1}(\alpha)$
are equal, so belong to 
\begin{equation}
\bigcap_{\pi\alpha=\beta}M_{l-1}(\alpha)=q^{-1}\pi^{-n-(l-1)m}\bigcap_{\pi\alpha=\beta}N_{l-1}(\alpha)=q^{-1}\pi^{-(l-1)m}N_{l}(\beta).
\end{equation}
This implies that their sum, $\sum_{\pi\alpha=\beta}C_{l-1}(\alpha)\in\pi^{-(l-1)m}N_{l}(\beta)\subset\pi^{-(l-1)m}N_{l+1}(\gamma).$
But $|\mu\lambda|=|q^{-1}\pi^{-n-2m}|,$ so for every $\beta,$
\begin{equation}
\mu\lambda\sum_{\pi\alpha=\beta}C_{l-1}(\alpha)\in q^{-1}\pi^{-n-(l+1)m}N_{l+1}(\gamma)=M_{l+1}(\gamma).
\end{equation}

Since each of the four terms in (3.6) has been shown to lie in $M_{l+1}(\gamma)$,
the proof of the induction step is complete. \end{proof}

When $l=0,$ $C_{0}(\beta)\in M_{0}(\beta),$ and this proves Theorem
1.2.

\section{The tamely ramified case}

For the sake of completeness we treat also case (2) of the theorem,
which is covered by {[}K-dS12{]}. The proof is the same, except that
we have cleaned up the computations.

\subsection{The recursion relations}

Assume from now on that at least one of the characters $\chi_{1}$
and $\chi_{2}$ is ramified, but $\tau=\det(.)^{m},$ i.e. $n=0.$
Since a twist of $\rho$ by a character of finite order does not affect
the validity of Theorem 1.2, we may assume that $\chi_{2}$ is unramified.
We let $\varepsilon$ be the restriction of $\chi_{1}$ to $U_{F},$
and extend it to a character of $F^{\times}$ so that $\varepsilon(\pi)=1.$
We denote by $\nu\ge1$ the conductor of $\varepsilon.$ Letting $\lambda=\chi_{1}(\pi)$
and $\mu=\chi_{2}(\pi)$ as before, we have 
\begin{equation}
\chi_{1}(u\pi^{k})=\varepsilon(u)\lambda^{k},\,\chi_{2}(u\pi^{k})=\mu^{k}
\end{equation}
if $u\in U_{F}.$

Recall that 
\begin{equation}
F_{k}^{\prime}=\varepsilon\sum_{l=k}^{\infty}\lambda^{l-k}\phi_{l},\,\,\, F_{k}^{\prime\prime}=\sum_{l=k}^{\infty}\mu^{l-k}\phi_{l}.
\end{equation}
The module $\Lambda$ consists this time of functions of the form
\begin{eqnarray}
\phi(x) & = & \sum_{k=k_{0}}^{\infty}\sum_{\beta\in W}c_{k}^{\prime}(\beta)\psi_{\beta}(-\pi^{-k}x)F_{k}^{\prime}(x)+c_{k}^{\prime\prime}(\beta)\psi_{\beta}(-\pi^{-k}x)F_{k}^{\prime\prime}(x)\\
 & = & \sum_{l=k_{0}}^{\infty}\sum_{\beta\in W}C_{l}(\beta)\psi_{\beta}(-\pi^{-l}x)\phi_{l}(x),\notag
\end{eqnarray}
with $c_{k}^{\prime}(\beta),c_{k}^{\prime\prime}(\beta)\in\pi^{-mk}\mathcal{O}_{E},$
and some $C_{l}(\beta)$ which we are now going to compute. Let, as
before 
\begin{eqnarray}
C_{l}^{\prime}(\beta) & = & \sum_{k=k_{0}}^{l}\lambda^{l-k}\sum_{\pi^{l-k}\alpha=\beta}c_{k}^{\prime}(\alpha)\notag\\
C_{l}^{\prime\prime}(\beta) & = & \sum_{k=k_{0}}^{l}\mu^{l-k}\sum_{\pi^{l-k}\alpha=\beta}c_{k}^{\prime\prime}(\alpha).
\end{eqnarray}
These coefficients satisfy the \emph{recursion relations} 
\begin{eqnarray}
C_{k_{0}}^{\prime}(\beta) & = & c_{k_{0}}^{\prime}(\beta)\\
C_{l}^{\prime}(\beta) & = & \lambda\sum_{\pi\alpha=\beta}C_{l-1}^{\prime}(\alpha)+c_{l}^{\prime}(\beta),\notag
\end{eqnarray}
and similarly for $C_{l}^{\prime\prime}(\beta),$ with $\mu$ instead
of $\lambda.$ In terms of the $C_{l}^{\prime}(\beta)$ and the $C_{l}^{\prime\prime}(\beta)$
we have 
\begin{equation}
\phi(x)=\varepsilon(x)\sum_{l=k_{0}}^{\infty}\sum_{\beta\in W}C_{l}^{\prime}(\beta)\psi_{\beta}(-\pi^{-l}x)\phi_{l}(x)+\sum_{l=k_{0}}^{\infty}\sum_{\beta\in W}C_{l}^{\prime\prime}(\beta)\psi_{\beta}(-\pi^{-l}x)\phi_{l}(x).
\end{equation}

Invoking the Fourier expansion of $\varepsilon(x)\phi_{l}(x)$ (see
{[}K-dS12{]}, Corollary 2.2) we finally get the formula 
\begin{equation}
C_{l}(\beta)=\frac{\tau(\varepsilon^{-1})}{q^{\nu}}\sum_{u\in U_{F}/U_{F}^{\nu}}\varepsilon^{-1}(u)C_{l}^{\prime}(\beta-\pi^{-\nu}u)+C_{l}^{\prime\prime}(\beta).
\end{equation}
Here $U_{F}^{\nu}$ denotes the group of units which are congruent
to 1 modulo $\pi^{\nu},$ and $\tau(\varepsilon^{-1})$ is the Gauss
sum 
\begin{equation}
\tau(\varepsilon^{-1})=\sum_{u\in U_{F}/U_{F}^{\nu}}\psi(\pi^{-\nu}u)\varepsilon(u).
\end{equation}

We recall the well-known identity 
\begin{equation}
\tau(\varepsilon)\tau(\varepsilon^{-1})=\varepsilon(-1)q^{\nu}.
\end{equation}

\subsection{Operators on functions on $W$}

As in {[}K-dS12{]}, Section 3.4, we introduce some operators on the
space $\mathcal{C}$ of $E$-valued functions on $W$ with finite
support. If $f\in\mathcal{C}$ we define
\begin{itemize}
\item The $\emph{suspension}$ of $f$
\begin{equation}
Sf(\beta)=\sum_{\pi\alpha=\beta}f(\alpha).
\end{equation}

\item The \emph{convolution} of $f$ with a character $\xi$ of $U_{F},$
of conductor $\nu\ge1$
\begin{equation}
E_{\xi}f(\beta)=\frac{\tau(\xi^{-1})}{q^{\nu}}\sum_{u\in U_{F}/U_{F}^{\nu}}\xi^{-1}(u)f(\beta-\pi^{-\nu}u).
\end{equation}

\item The operator $\Pi$
\begin{equation}
\Pi f(\beta)=f(\pi\beta).
\end{equation}

\end{itemize}
We decompose $\mathcal{C}$ as a direct sum $\mathcal{C}=\mathcal{C}_{0}\bigoplus\mathcal{C}_{1},$
where 
\begin{eqnarray}
\mathcal{C}_{0} & = & \left\{ f|\,\forall\beta,\,\,\sum_{\pi t=0}f(\beta+t)=0\right\} \\
\mathcal{C}_{1} & = & \left\{ f|\, f(\beta)\text{ depends only on }\pi\beta\right\} .\notag
\end{eqnarray}

\begin{lemma} (i) The projection onto $\mathcal{C}_{1}$ is 
\begin{equation}
P_{1}=\frac{1}{q}\Pi S.
\end{equation}

(ii) Let $\xi$ be any non-trivial character. Then the projection
onto $\mathcal{C}_{0}$ is 
\begin{equation}
P_{0}=E_{\xi}E_{\xi^{-1}}=E_{\xi^{-1}}E_{\xi}.
\end{equation}

(iii) If $\xi$ is non-trivial then $SE_{\xi}=0$ and $E_{\xi}E_{\xi^{-1}}E_{\xi}=E_{\xi}.$
\end{lemma}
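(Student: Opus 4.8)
The plan is to work entirely with the finitely supported functions on $W=F/\mathcal{O}_{F}$, reducing every assertion to orthogonality of characters on the finite quotients $U_{F}^{a}/U_{F}^{b}$, with the only outside input being the Gauss sum identity $\tau(\xi)\tau(\xi^{-1})=\xi(-1)q^{\nu}$ recalled above. Part (i) I would dispatch first by direct computation: $\Pi Sf(\beta)=Sf(\pi\beta)=\sum_{\pi\alpha=\pi\beta}f(\alpha)$ is the sum of $f$ over the coset $\beta+W_{1}$, where $W_{1}=\pi^{-1}\mathcal{O}_{F}/\mathcal{O}_{F}$ has $q$ elements, so $P_{1}=\frac{1}{q}\Pi S$ replaces $f$ by its average over each $W_{1}$-coset. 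This operator is visibly idempotent, its image lies in $\mathcal{C}_{1}$, it fixes $\mathcal{C}_{1}$, and its kernel is precisely $\{f:Sf=0\}=\mathcal{C}_{0}$; hence $P_{1}$ is the projection onto $\mathcal{C}_{1}$ along $\mathcal{C}_{0}$, $\mathcal{C}=\mathcal{C}_{0}\oplus\mathcal{C}_{1}$, and the complementary projection $P_{0}=I-P_{1}$ is characterized by having image $\mathcal{C}_{0}=\ker S$.

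Next I would prove the reusable fact $SE_{\xi}=0$ for non-trivial $\xi$. Substituting the definition of $E_{\xi}$ and reindexing, $SE_{\xi}f(\gamma)=\frac{\tau(\xi^{-1})}{q^{\nu}}\sum_{u\in U_{F}/U_{F}^{\nu}}\xi^{-1}(u)\,Sf(\gamma-\pi^{1-\nu}u)$, and since $Sf(\gamma-\pi^{1-\nu}u)$ depends only on the image of $u$ in $U_{F}/U_{F}^{\nu-1}$, grouping the sum over those cosets leaves an inner factor $\sum_{w\in U_{F}^{\nu-1}/U_{F}^{\nu}}\xi^{-1}(w)$, which vanishes because $\xi$ has conductor exactly $\nu$ and so is non-trivial on $U_{F}^{\nu-1}/U_{F}^{\nu}$. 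The same grouping shows $E_{\xi}$, and likewise $E_{\xi^{-1}}$, annihilates $\mathcal{C}_{1}$, since for $f\in\mathcal{C}_{1}$ the shift $f(\beta-\pi^{-\nu}u)$ again depends only on $u$ modulo $U_{F}^{\nu-1}$.

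The heart of the proof, and the step I expect to be the main obstacle, is the identity $E_{\xi}E_{\xi^{-1}}=P_{0}$ (so that, by the symmetry $\xi\leftrightarrow\xi^{-1}$, also $E_{\xi^{-1}}E_{\xi}=P_{0}$). Expanding the composite and using $\tau(\xi)\tau(\xi^{-1})=\xi(-1)q^{\nu}$ gives
$$E_{\xi}E_{\xi^{-1}}f(\beta)=\frac{\xi(-1)}{q^{\nu}}\sum_{u,v\in U_{F}/U_{F}^{\nu}}\xi(v)\xi^{-1}(u)\,f(\beta-\pi^{-\nu}(u+v)).$$
Setting $s=u^{-1}v$ and summing over $u$ first, the inner sum $\sum_{u}f(\beta-\pi^{-\nu}u(1+s))$ depends, through the valuation $j=v(1+s)$, only on $u$ modulo $U_{F}^{\nu-j}$; it therefore collapses to $q^{j}$ times a sum of $f$ over $U_{F}/U_{F}^{\nu-j}$, and to $|U_{F}/U_{F}^{\nu}|\,f(\beta)$ once $j\ge\nu$. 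Partitioning the $s$-sum according to $j$ and using at each level that $\xi$ is non-trivial on $U_{F}^{j}/U_{F}^{\nu}$ for $j<\nu$, every contribution cancels except those with $j=\nu-1$ and $j=\nu$; totaling these two survivors yields $E_{\xi}E_{\xi^{-1}}f(\beta)=\frac{1}{q}\big((q-1)f(\beta)-T_{1}(\beta)\big)$, where $T_{1}(\beta)=\sum_{0\neq t\in W_{1}}f(\beta+t)$ is the sum of $f$ over the punctured $W_{1}$-coset. Since $T_{1}(\beta)=\sum_{t\in W_{1}}f(\beta+t)-f(\beta)$, this equals $f(\beta)-P_{1}f(\beta)=P_{0}f(\beta)$, which is (ii) given (i). The delicate points are the uniform handling of the signs, which the Gauss sum identity supplies and which also absorbs the residue characteristic $2$ case, and the degenerate case $\nu=1$, where the range $j<\nu-1$ is empty but the final formula persists.

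Finally, part (iii) follows formally from the previous two steps: $SE_{\xi}=0$ says the image of $E_{\xi}$ lies in $\ker S=\mathcal{C}_{0}$, and by (ii) the operator $P_{0}=E_{\xi}E_{\xi^{-1}}$ fixes $\mathcal{C}_{0}$ pointwise, whence $E_{\xi}E_{\xi^{-1}}E_{\xi}=P_{0}E_{\xi}=E_{\xi}$. This completes the lemma; the only genuinely computational input is the valuation-by-valuation cancellation in the third paragraph.
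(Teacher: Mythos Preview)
Your argument is correct. Part (i) and the vanishing $SE_{\xi}=0$ are exactly the right direct computations, and the valuation-by-valuation analysis in the double sum for $E_{\xi}E_{\xi^{-1}}$ checks out: the character sums $\sum_{v(1+s)\ge j}\xi(s)=\xi(-1)\sum_{w\in U_{F}^{j}/U_{F}^{\nu}}\xi(w)$ vanish for $0\le j<\nu$, so only $j=\nu-1$ and the single class $s=-1$ survive, giving $\tfrac{q-1}{q}f(\beta)-\tfrac{1}{q}T_{1}(\beta)=P_{0}f(\beta)$.

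The paper, however, takes a different and shorter route: it passes to the Fourier transform $\hat f(x)=\sum_{\beta}f(\beta)\psi_{\beta}(x)$ on $\mathcal{O}_{F}$ and invokes Lemma~2.1. Under this transform $\mathcal{C}_{1}$ and $\mathcal{C}_{0}$ become the spaces of functions supported on $\pi\mathcal{O}_{F}$ and on $U_{F}$ respectively, $S$ becomes $\hat f\mapsto\hat f(\pi x)$, and $E_{\xi}$ becomes multiplication by the function equal to $\xi(-x)$ on $U_{F}$ and $0$ on $\pi\mathcal{O}_{F}$; all three assertions are then immediate. Your direct approach on $W$ trades that conceptual picture for explicit orthogonality bookkeeping: it is more self-contained (the paper in fact defers details to [K-dS12]) and makes the role of the Gauss-sum identity very visible, at the cost of the longer $j$-stratified computation. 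Either proof is fine.
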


\begin{proof} All the statements are elementary, and best understood
if we associate to $f$ its Fourier transform 
\begin{equation}
\hat{f}(x)=\sum_{\beta\in W}f(\beta)\psi_{\beta}(x)
\end{equation}
($x\in\mathcal{O}_{F}$) and apply Lemma 2.1. See {[}K-dS12{]}, Section
3.4. \end{proof}

For $f,g_{1},\dots,g_{r}\in\mathcal{C}$ we write $f=O(g_{1},\dots,g_{r})$
to mean that in the $\sup$ norm $||f||\le\max||g_{i}||.$

\subsection{Conclusion of the proof in the tamely ramified case}

We assume from now on that $\nu=1,$ i.e. $\varepsilon$ is tamely
ramified. The Breuil-Schneider estimates on $\lambda$ and $\mu$
are 
\begin{eqnarray*}
|\pi^{-m}| & \le & |\lambda|,|\mu|\le|q^{-1}\pi^{-m}|\\
\,\,|\lambda\mu| & = & |q^{-1}\pi^{-2m}|.
\end{eqnarray*}

Fix a $\phi\in\Lambda$ as in (4.3), so that 
\begin{equation}
c_{k}^{\prime},\, c_{k}^{\prime\prime}=O(\pi^{-mk}),
\end{equation}
and assume that it vanishes off $\mathcal{O}_{F}.$ We shall prove
by increasing induction on $l$ that for $l\le0$
\begin{equation}
C_{l}^{\prime},\,\, C_{l}^{\prime\prime}=O(q^{-1}\pi^{-ml}).
\end{equation}
When we reach $l=0$ this will imply Theorem 1.2, even uniformly in
$\beta$, thanks to the fact that the algebraic part of $\rho$ is
essentially trivial.

Using the notation of the last sub-section, we can write the recursion
relations (4.5) as 
\begin{eqnarray}
C_{k_{0}}^{\prime} & = & c_{k_{0}}^{\prime},\,\, C_{k_{0}}^{\prime\prime}=c_{k_{0}}^{\prime\prime}\notag\\
C_{l}^{\prime} & = & \lambda SC_{l-1}^{\prime}+c_{l}^{\prime}\\
C_{l}^{\prime\prime} & = & \mu SC_{l-1}^{\prime\prime}+c_{l}^{\prime\prime}.\notag
\end{eqnarray}
Besides $C_{l}(\beta)$ we introduce $\tilde{C}_{l}(\beta)$ so that
the following formulae hold 
\begin{eqnarray}
C_{l} & = & E_{\varepsilon}C_{l}^{\prime}+C_{l}^{\prime\prime}\\
\tilde{C}_{l} & = & E_{\varepsilon^{-1}}C_{l}^{\prime\prime}+C_{l}^{\prime}.\notag
\end{eqnarray}
Here the first formula is just (4.7). The second shows that the amplitudes
$\tilde{C}_{l}(\beta)$ are analogously associated with the function
$\tilde{\phi}(x)=\varepsilon^{-1}(x)\phi(x).$

Next, we observe that since $SE_{\varepsilon}=SE_{\varepsilon^{-1}}=0,$
we can rewrite the recursion relations as 
\begin{eqnarray}
C_{l}^{\prime} & = & \lambda S\tilde{C}_{l-1}+c_{l}^{\prime}\notag\\
C_{l}^{\prime\prime} & = & \mu SC_{l-1}+c_{l}^{\prime\prime}.
\end{eqnarray}
For $l\le0$ the functions $C_{l-1}$ and $\tilde{C}_{l-1}$ belong
to the subspace that we have called $\mathcal{C}_{1},$ because $\phi$
and $\tilde{\phi}$ vanish on $\pi^{l-1}U_{F}$. This implies the
following result.

\begin{lemma} For $l\le0,$
\begin{eqnarray}
C_{l}^{\prime} & = & O(\lambda q\tilde{C}_{l-1},c_{l}^{\prime})\notag\\
C_{l}^{\prime\prime} & = & O(\mu qC_{l-1},c_{l}^{\prime\prime}).
\end{eqnarray}
\end{lemma}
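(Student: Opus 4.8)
The plan is to read the estimate straight off the rewritten recursion relations $C_{l}^{\prime}=\lambda S\tilde{C}_{l-1}+c_{l}^{\prime}$ and $C_{l}^{\prime\prime}=\mu SC_{l-1}+c_{l}^{\prime\prime}$, using two facts: the behaviour of the suspension operator $S$ on the subspace $\mathcal{C}_{1}$, and the membership $C_{l-1},\tilde{C}_{l-1}\in\mathcal{C}_{1}$ for $l\le 0$, which was already observed just above the statement.

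For the first fact I would note that if $f\in\mathcal{C}_{1}$, so that $f(\alpha)$ depends only on $\pi\alpha$, then the $q$ summands in $Sf(\beta)=\sum_{\pi\alpha=\beta}f(\alpha)$ are all equal; hence $Sf(\beta)=q\,f(\alpha)$ for any preimage $\alpha$ of $\beta$, so $||Sf||\le|q|\,||f||$, which in the $O$-notation introduced above reads $Sf=O(qf)$. For the second fact I would recall its justification: since $\phi$ vanishes off $\mathcal{O}_{F}$ and $l-1<0$, $\phi$ vanishes on the annulus $\pi^{l-1}U_{F}$, and the version of Lemma~2.1(i) transported to that annulus forces its amplitude $C_{l-1}$ to depend only on $\pi\beta$, i.e. $C_{l-1}\in\mathcal{C}_{1}$; and since $\varepsilon^{-1}$ is unitary, $\tilde{\phi}=\varepsilon^{-1}\phi$ has the same support as $\phi$, so the amplitude $\tilde{C}_{l-1}$ associated to $\tilde{\phi}$ lies in $\mathcal{C}_{1}$ as well.

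Combining the two: in $C_{l}^{\prime}=\lambda S\tilde{C}_{l-1}+c_{l}^{\prime}$ the function $\tilde{C}_{l-1}$ lies in $\mathcal{C}_{1}$, so $||\lambda S\tilde{C}_{l-1}||=|\lambda q|\,||\tilde{C}_{l-1}||$, and the ultrametric inequality for the sup norm on $\mathcal{C}$ gives $||C_{l}^{\prime}||\le\max(|\lambda q|\,||\tilde{C}_{l-1}||,||c_{l}^{\prime}||)$, that is $C_{l}^{\prime}=O(\lambda q\tilde{C}_{l-1},c_{l}^{\prime})$; in the same way $C_{l}^{\prime\prime}=\mu SC_{l-1}+c_{l}^{\prime\prime}$ with $C_{l-1}\in\mathcal{C}_{1}$ gives $C_{l}^{\prime\prime}=O(\mu qC_{l-1},c_{l}^{\prime\prime})$. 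I do not expect a genuine obstacle here: the lemma is pure bookkeeping once the rewritten recursion is available. The only point deserving any care is the $\mathcal{C}_{1}$-membership of $C_{l-1}$ and $\tilde{C}_{l-1}$, which is exactly where the hypothesis $l\le 0$ enters; it is through the resulting application of $S$ on $\mathcal{C}_{1}$ that the small factor $q$ (the \emph{first phenomenon} of the introduction) is produced.
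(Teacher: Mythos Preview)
Your proposal is correct and follows exactly the paper's approach: the paper states the lemma as an immediate consequence of the rewritten recursion relations together with the observation (recorded just before the lemma) that $C_{l-1},\tilde{C}_{l-1}\in\mathcal{C}_{1}$ for $l\le 0$, and you have simply spelled out the one-line computation $Sf=O(qf)$ for $f\in\mathcal{C}_{1}$ that the paper leaves implicit.
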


We can now proceed with the induction. When $l=k_{0}$ (4.17) clearly
implies (4.18). Assume that $l\le0$ and that (4.18) has been established
up to index $l-1.$ As $C_{l-2}^{\prime}=O(q^{-1}\pi^{-m(l-2)}),$
and as $C_{l-2}^{\prime\prime}=O(q^{-1}\pi^{-m(l-2)})=O(q^{-2}\tau(\varepsilon^{-1})\pi^{-m(l-2)}),$
we obtain from (4.20) and the fact that $\nu=1$ the estimate 
\begin{equation}
C_{l-2}=O(q^{-2}\tau(\varepsilon^{-1})\pi^{-m(l-2)}).
\end{equation}
By the lemma, this gives 
\begin{equation}
C_{l-1}^{\prime\prime}=O(\mu q^{-1}\tau(\varepsilon^{-1})\pi^{-m(l-2)},c_{l-1}^{\prime\prime})=O(\mu q^{-1}\tau(\varepsilon^{-1})\pi^{-m(l-2)})
\end{equation}
(the last equality coming from $|\mu q^{-1}\tau(\varepsilon^{-1})|\ge|\pi^{-m}|$).
A second application of (4.20), the identity (4.9), and the induction
hypothesis for $C_{l-1}^{\prime}$ (recall $|\mu|\ge|\pi^{-m}|$)
yield 
\begin{equation}
\tilde{C}_{l-1}=O(\mu q^{-1}\pi^{-m(l-2)}).
\end{equation}
A second application of the lemma finally gives 
\begin{eqnarray}
C_{l}^{\prime} & = & O(\lambda\mu\pi^{-m(l-2)},c_{l}^{\prime\prime})\notag\\
 & = & O(q^{-1}\pi^{-ml},c_{l}^{\prime\prime})=O(q^{-1}\pi^{-ml}).
\end{eqnarray}
Symmetrically, we get the same estimate on $C_{l}^{\prime\prime}.$
This completes the proof of (4.18) at level $l$, and with it, the
proof of Theorem 1.2.

\section{The case $\chi_{1}=\omega\chi_{2}$}

We finally deal with the one excluded case, when $\chi_{1}=\omega\chi_{2}$.
After a twist by a character of finite order we may assume that $\chi_{1}$
is unramified. In this case $\lambda=\mu$ and the Kirillov model
is the space 
\begin{equation}
\mathcal{K}=C_{c}^{\infty}(F,\tau)\chi_{1}+C_{c}^{\infty}(F,\tau)v\chi_{1},
\end{equation}
where $v:F^{\times}\rightarrow\Bbb{Z}\subset E$ is the normalized
valuation. The action of $B$ is still given by (1.10). Once more,
$\mathcal{K}$ contains $\mathcal{K}_{0}=C_{c}^{\infty}(F^{\times},\tau)$
as a subspace. When $\tau=1,$ the quotient $\mathcal{K}/\mathcal{K}_{0}$
is the Jacquet module. The torus acts on it non-semisimply, by 
\begin{equation}
\left(\begin{array}{ll}
t_{1}\\
 & t_{2}
\end{array}\right)\mapsto\chi_{1}(t_{1}t_{2})\left(\begin{array}{ll}
1 & v(t_{1}/t_{2})\\
 & 1
\end{array}\right).
\end{equation}

Following the notation of Section 3, we let 
\begin{equation}
F_{0}^{\prime}=\chi_{1}1_{\mathcal{O}_{F}},\,\,\, F_{0}^{\prime\prime}=-v\chi_{1}1_{\mathcal{O}_{F}}
\end{equation}
and 
\begin{equation}
F_{k}^{\prime}=\sum_{l=k}^{\infty}\lambda^{l-k}\phi_{l},\,\,\,\, F_{k}^{\prime\prime}=\sum_{l=k}^{\infty}(k-l)\lambda^{l-k}\phi_{l}.
\end{equation}
The module $\Lambda$ consists of all the functions $\phi$ as in
(2.16), and any such $\phi$ can be expanded ``by annuli'' as in
(3.2). The coefficients of the expansion are given by (3.3), except
that the last equation now takes the shape 
\begin{equation}
C_{l}^{\prime\prime}(\beta)=\sum_{k=k_{0}}^{l}(k-l)\lambda^{l-k}\sum_{\pi^{l-k}\alpha=\beta}c_{k}^{\prime\prime}(\alpha).
\end{equation}
The recursion relation for $C_{l}^{\prime}(\beta)$ is given by (3.4)
but $C_{l}^{\prime\prime}(\beta)$ needs a modification.

\begin{lemma} We have $C_{k_{0}}^{\prime\prime}(\beta)=0,$ $C_{k_{0}+1}^{\prime\prime}(\beta)=-\lambda\sum_{\pi\alpha=\beta}c_{k_{0}}^{\prime\prime}(\alpha)$,
and for $l>k_{0}$ 
\begin{equation}
C_{l+1}^{\prime\prime}(\gamma)=2\lambda\sum_{\pi\beta=\gamma}C_{l}^{\prime\prime}(\beta)-\lambda^{2}\sum_{\pi\beta=\gamma}\sum_{\pi\alpha=\beta}C_{l-1}^{\prime\prime}(\alpha)-\lambda\sum_{\pi\beta=\gamma}c_{l}^{\prime\prime}(\beta).
\end{equation}
\end{lemma}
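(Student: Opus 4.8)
The plan is to exploit the confluence $\lambda=\mu$: whereas the coefficients $C_{l}'(\beta)$ of Section~3 are, by the formula in (3.3), the convolution of the $c_{k}'$ against the powers of $\lambda S$ (with $S$ the suspension operator of Section~4.2), and therefore obey the first-order recursion (3.4), the $C_{l}''(\beta)$ are the convolution of the $c_{k}''$ against $j\,(\lambda S)^{j}$, which is governed by $(1-\lambda S)^{-2}$ rather than $(1-\lambda S)^{-1}$; this is what produces a \emph{second}-order recursion.

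First I would rewrite (5.5). Since $\sum_{\pi^{l-k}\alpha=\beta}c_{k}''(\alpha)=(S^{l-k}c_{k}'')(\beta)$, the substitution $j=l-k$ gives, as an identity of functions on $W$,
\[
C_{l}''=-\sum_{j=0}^{l-k_{0}} j\,(\lambda S)^{j}\,c_{l-j}'' ,
\]
with the convention $c_{k}''=0$ for $k<k_{0}$, which makes all sums below finite.

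Next I would introduce the shift-and-suspend operator $\mathcal{T}$ on sequences $x=(x_{l})$ of elements of $\mathcal{C}$ (extended by $0$ for $l<k_{0}$), defined by $(\mathcal{T}x)_{l}=\lambda S x_{l-1}$, so that $(\mathcal{T}^{j}x)_{l}=(\lambda S)^{j}x_{l-j}$. The displayed formula then reads $C''=-\bigl(\sum_{j\ge0} j\,\mathcal{T}^{j}\bigr)c''$. At each fixed index only finitely many powers of $\mathcal{T}$ act, so the power-series identity $\sum_{j\ge0}j\,t^{j}=t(1-t)^{-2}$ is legitimate here and yields $\sum_{j\ge0}j\,\mathcal{T}^{j}=\mathcal{T}(I-\mathcal{T})^{-2}$, hence
\[
(I-\mathcal{T})^{2}C''=-\mathcal{T}c'' , \qquad\text{i.e.}\qquad C''-2\mathcal{T}C''+\mathcal{T}^{2}C''=-\mathcal{T}c'' .
\]
Reading off the component at index $l+1$ gives $C_{l+1}''-2\lambda S C_{l}''+\lambda^{2}S^{2}C_{l-1}''=-\lambda S c_{l}''$; evaluating at $\gamma$ and rearranging is exactly (5.6). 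Taking the same component identity at indices $l=k_{0}-1$ and $l=k_{0}$, where the sequences vanish in index $<k_{0}$, yields $C_{k_{0}}''=0$ and $C_{k_{0}+1}''=-\lambda S c_{k_{0}}''$, i.e. $C_{k_{0}+1}''(\gamma)=-\lambda\sum_{\pi\beta=\gamma}c_{k_{0}}''(\beta)$.

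I do not expect a genuine obstacle: the argument is formal, and the only points needing care are that every series involved is finite (because $c_{k}''=0$ for $k<k_{0}$) and that the index shifts built into $\mathcal{T}$ are tracked correctly. If one prefers to avoid operators, the identity (5.6) can be checked by hand: substitute (5.5) into $2\lambda\sum_{\pi\beta=\gamma}C_{l}''(\beta)-\lambda^{2}\sum_{\pi\beta=\gamma}\sum_{\pi\alpha=\beta}C_{l-1}''(\alpha)$ and collect the coefficient of $(\lambda S)^{j}c_{l+1-j}''$; it equals $-2(j-1)+(j-2)=-j$ for $j\ge3$ and $-2$ for $j=2$, so, comparing with the coefficient $-j$ occurring in $C_{l+1}''$ via (5.5), the difference is precisely the missing $j=1$ term $-\lambda S c_{l}''$, which is the right-hand side of (5.6).
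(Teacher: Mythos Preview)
Your proof is correct. The paper itself dismisses this lemma as ``a straightforward exercise'' and gives no argument, so there is nothing to compare against; your operator packaging via $(I-\mathcal{T})^{2}C''=-\mathcal{T}c''$ is a clean way to see why the confluence $\lambda=\mu$ forces a second-order recursion, and your direct coefficient check at the end is exactly the kind of verification the paper has in mind. One tiny wording quibble: when you say ``which is the right-hand side of (5.6)'', you mean the remaining term $-\lambda Sc_{l}''$ on the right-hand side, not the full right-hand side.
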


\begin{proof} A straightforward exercise. \end{proof}

\begin{lemma} The following recursion relation holds: 
\begin{equation}
C_{l+1}(\gamma)=2\lambda\sum_{\pi\beta=\gamma}C_{l}(\beta)-\lambda^{2}\sum_{\pi\beta=\gamma}\sum_{\pi\alpha=\beta}C_{l-1}(\alpha)-\lambda\sum_{\pi\beta=\gamma}(c_{l}^{\prime\prime}(\beta)+c_{l}^{\prime}(\beta))+c_{l+1}^{\prime}(\gamma).
\end{equation}
\end{lemma}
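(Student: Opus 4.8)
The plan is to split $C_l=C_l'+C_l''$ and to cast the recursions for the two pieces in one common two-generation form, then simply add them. For $C_l''$ there is nothing left to do: the previous lemma already supplies, for $l>k_0$, the two-generation relation (5.6). So the only task is to rewrite the recursion for $C_l'$ in the same shape. By (3.4), $C_l'$ satisfies merely the one-step relation $C_{l+1}'(\gamma)=\lambda\sum_{\pi\beta=\gamma}C_l'(\beta)+c_{l+1}'(\gamma)$, so I would \emph{inflate} it to two steps. Applying (3.4) again at level $l$ (legitimate for $l>k_0$) gives $C_l'(\beta)-c_l'(\beta)=\lambda\sum_{\pi\alpha=\beta}C_{l-1}'(\alpha)$; using this to rewrite one copy of $\lambda\sum_{\pi\beta=\gamma}C_l'(\beta)$ in the trivial identity $\lambda\sum C_l'=2\lambda\sum C_l'-\lambda\sum C_l'$ produces, for $l>k_0$,
\[
\lambda\sum_{\pi\beta=\gamma}C_l'(\beta)=2\lambda\sum_{\pi\beta=\gamma}C_l'(\beta)-\lambda^2\sum_{\pi\beta=\gamma}\sum_{\pi\alpha=\beta}C_{l-1}'(\alpha)-\lambda\sum_{\pi\beta=\gamma}c_l'(\beta),
\]
and adding $c_{l+1}'(\gamma)$ to both sides turns the left side into $C_{l+1}'(\gamma)$ and the right side into a two-generation recursion carrying exactly the coefficients $2\lambda$ and $-\lambda^2$ that occur in (5.6).

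Adding this recursion to (5.6) and using $C_l=C_l'+C_l''$ and $c_l=c_l'+c_l''$ collapses everything into the asserted identity (5.7). It is worth noting why only the single term $c_{l+1}'(\gamma)$ survives at the top: the $C''$-recursion (5.6) contains no $c_{l+1}''$ term at all, whereas the inflation of the $C'$-recursion introduces precisely $c_{l+1}'(\gamma)$. The one point demanding care is the bottom of the induction: both (3.4) and the previous lemma behave exceptionally at $l=k_0$ (and the previous lemma also at $l=k_0+1$), so (5.7) should be read as holding for $l>k_0$, the initial data being $C_{k_0}(\beta)=c_{k_0}(\beta)$ together with the special value $C_{k_0+1}''(\beta)=-\lambda\sum_{\pi\alpha=\beta}c_{k_0}''(\alpha)$ already recorded. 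This is a matter of bookkeeping, not a genuine obstacle; the substance of the proof is the one-line rearrangement above, entirely parallel to the derivation of (3.5) in the unramified case, and I foresee no real difficulty.
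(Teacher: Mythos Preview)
Your proposal is correct and is essentially identical to the paper's own proof: the paper also inflates the one-step recursion (3.4) for $C_{l+1}'$ to a two-step one via the substitution $\lambda\sum C_l'=2\lambda\sum C_l'-\lambda\sum(\lambda\sum C_{l-1}'+c_l')$, obtaining exactly your displayed identity, and then adds the result to (5.6). Your remark that no $c_{l+1}''$ term appears because (5.6) already lacks one is the same observation that makes the paper's addition work.
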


\begin{proof} We write 
\begin{eqnarray}
C_{l+1}^{\prime}(\gamma) & = & \lambda\sum_{\pi\beta=\gamma}C_{l}^{\prime}(\beta)+c_{l+1}^{\prime}(\gamma)\notag\\
 & = & 2\lambda\sum_{\pi\beta=\gamma}C_{l}^{\prime}(\beta)-\lambda\sum_{\pi\beta=\gamma}\left(\lambda\sum_{\pi\alpha=\beta}C_{l-1}^{\prime}(\alpha)+c_{l}^{\prime}(\beta)\right)+c_{l+1}^{\prime}(\gamma)\notag\\
 & = & 2\lambda\sum_{\pi\beta=\gamma}C_{l}^{\prime}(\beta)-\lambda^{2}\sum_{\pi\beta=\gamma}\sum_{\pi\alpha=\beta}C_{l-1}^{\prime}(\alpha)-\lambda\sum_{\pi\beta=\gamma}c_{l}^{\prime}(\beta)+c_{l+1}^{\prime}(\gamma)
\end{eqnarray}
and we add the result to the recursive relation for $C_{l+1}^{\prime\prime}(\gamma).$
\end{proof}

Note the similarity with Lemma 3.1. The rest of the proof of Theorem
1.2 is now identical to that given in the case $\lambda\neq\mu$ in
Section 3.2.

\bigskip{}

\textbf{References\medskip{}
}

{[}B-B-C{]} L.Berger, C.Breuil, P.Colmez (Eds.): \emph{Représentations
}$p$\emph{-adiques de groupes} $p$\emph{-adiques,} Astérisque, \textbf{319,
330, 331 }(2008-2010).\medskip{}

{[}Br03{]} C. Breuil: \emph{Sur quelques représentations modulaires
et} $p$\emph{-adiques de} $GL_{2}(\Bbb{Q}_{p})$ \emph{II}, J. Inst.
Math. Jussieu \textbf{2} (2003), 1-36.\medskip{}

{[}Br04{]} C.Breuil, \emph{Invariant }$\mathcal{L}$ \emph{et série
spéciale }$p$\emph{-adique}, Annales Scientifiques E.N.S. \textbf{37}
(2004), 559-610.\medskip{}

{[}Br10{]} C.Breuil: \emph{The emerging p-adic Langlands programme,\,}Proceedings
of the International Congress of Mathematicians, Hyderabad, India,
2010.\medskip{}

{[}Br-Sch07{]} C.Breuil, P.Schneider: \emph{First steps towards }$p\emph{-adic}$\emph{\ Langlands
functoriality,} J. Reine Angew. Math. \textbf{610} (2007)149-180.\medskip{}

{[}Bu98{]} D.Bump: \emph{Automorphic Forms and Representations}, Cambridge
University Press, 1998.\medskip{}

{[}dI12{]} M.de Ieso, \emph{Analyse} $p$\emph{-adique et complétés
unitaires universeles pour} $GL_{2}(F),$ Ph.D. thesis, Orsay, 2012.\medskip{}

{[}dS08{]} E.de Shalit: \emph{Integral structures in locally algebraic
representations,} unpublished notes (2008).\medskip{}

{[}Hu09{]} Y.Hu: \emph{Normes invariantes et existence de filtrations
admissibles}, J. Reine Angew. Math. \textbf{634} (2009), 107-141.\medskip{}

{[}K-dS12{]} D.Kazhdan, E.de Shalit: \emph{Kirillov models and integral
structures in }$p$\emph{-adic smooth representations of} $GL_{2}(F),$
J.Algebra \textbf{353} (2012), 212-223.\medskip{}

{[}P01{]} D.Prasad, \emph{appendix} to P.Schneider, J.Teitelbaum:
$U(\frak{g})$\emph{-finite locally analytic representations, }Representation
Theory \textbf{5} (2001), 111-128.\medskip{}

{[}So13{]} C.Sorensen: \emph{A proof of the Breuil-Schneider conjecture
in the indecomposable case}, Annals of Mathematics \textbf{177} (2013),
1-16.\medskip{}

{[}T93{]} J.Teitelbaum, \emph{Modular representations of }$PGL_{2}$\emph{\ and
automorphic forms for Shimura curves}, Invent. Math. \textbf{113}
(1993), 561-580.\medskip{}

{[}V08{]} M.-F.Vigneras, \emph{A criterion for integral structures
and coefficient systems on the tree of }$PGL(2,F),$ Pure and Appl.
Math. Quat. \textbf{4 }(2008), 1291-1316.
\end{document}